\newtheorem{prop}{Proposition}[section]
\newtheorem{thm}{Theorem}[section]
\newtheorem{lem}{Lemma}[section]
\numberwithin{equation}{section}
\newtheorem{claim}{Claim}
\newcommand{\mc}[1]{\ensuremath{\mathcal{#1}}}
\newcommand{\mb}[1]{\ensuremath{\mathbb{#1}}}
\newcommand*{\sep}{ \ensuremath{ {\rm sep} }}
\newcommand*{\umdim}{ \ensuremath{ {\rm\overline{mdim}_M}} }
\newcommand*{\lmdim}{ \ensuremath{ {\rm\underline{mdim}_M} } }
\begin{document}

\title{Variational  principles of metric mean dimension for random dynamical systems
 \footnotetext {* Corresponding author}
  \footnotetext {2010 Mathematics Subject Classification: }}
\author{ Yunping Wang$^{1}$, Ercai Chen$^{2}$ and  Kexiang Yang$^{3*}$ \\ 
	\small 1 School of Science, Ningbo University of Technology,\\
	\small  Ningbo 315211, Zhejiang, P.R.China\\
	\small 2 School of Mathematical Sciences and Institute of Mathematics, Nanjing Normal University, \\ 
	\small Nanjing 210046, Jiangsu, P.R.China \\
	\small 3 School of Mathematical Sciences, Fudan University,  \\ \small Shanghai 200433, Shanghai, P. R. China \\
  \small    e-mail: yunpingwangj@126.com, ecchen@njnu.edu.cn, kxyangs@163.com 
}
\date{}
\maketitle

\begin{center}
 \begin{minipage}{120mm}
{\small {\bf Abstract.}
It  is well-known that the relativized  variational principle established by Bogenschutz and Kifer  connects the  fiber topological entropy and fiber measure-theoretic entropy. In context of random dynamical systems,  metric mean dimension was introduced to characterize infinite fiber entropy systems.  We  give four types of measure-theoretic $\epsilon$-entropies, called  measure-theoretic entropy of partitions decreasing in diameter, Shapira's entropy, Katok's entropy and Brin-Katok local entropy, and   establish four variational principles for metric mean dimension.}
\end{minipage}
 \end{center}

\vskip0.5cm {\small{\bf Keywords and phrases:} Continuous bundle RDS; Metric mean dimension; Measure-theoretic $\epsilon$-entropy; Variational principle}\vskip0.5cm
\section{Introduction}

~~~~The concepts of entropy plays a vital role in topological dynamics. The fundamental two  kinds of entropies which  receive  a lot of attention are the topological entropy and measure-theoretic entropy, which are related by the  well-known variational principle established by Goodwyn \cite{Goody}
 and Goodman  \cite{Goodman}:
 \begin{align*}
 h_{top}(T)=\sup_{\mu \in M(X,T)}h_{\mu}(T),
 \end{align*}
 where $T$ is a homeomorphism from a compact metric space $X$ to itself and  the supremum is take over all  $T$-invariant  Borel  probability measures  on $X$. 
 
Mean dimension, firstly introduced by Gromov \cite{Gro}, has been studied as a topological invariant of dynamical systems, which  quantifies the complexity of dynamical systems of infinite entropy.
Its applications especially in embedding problems have been stated in \cite{LT,LWE, GLT16,GYM}. In  \cite{LWE}, Lindenstrauss and Weiss introduced  the   dynamical version of Minkowski dimension, which is known as  metric mean dimension.  Inspired by the  variational principle for  topological entropy, it is expected to establish variational principles for  metric mean dimension in the setting of infinite entropy systems.  The lacking of the role of measure-theoretic metric mean dimension is the  main obstruction. In 2018, using the foundation of loss data compression method Lindenstrauss and Tsukamoto \cite{LT18}  established the first  variational principles  for  metric mean dimension  in terms of rate distortion functions. Besides,
a double variational principle is established for mean dimension \cite{LT19}, and is generalized to mean dimension with potentials  by Tsukamoto\cite{Tsu}.  By  replacing the rate distortion functions with other measure-theoretic $\epsilon$-entropies,  Gutman and  Spiewak \cite{Gut} derived a variational principle for metric mean dimension  involving  growth rates of measure-theoretic entropy of partitions decreasing  in diametric.  Shi \cite{Shi} obtained the  variational principles  for metric mean dimension in terms of  Shapira's entropy related to finite open covers, Katok's entropy and Brin-Katok local entropy. 
 
The present paper focus on  continuous bundle random dynamical system $T=(T_{\omega})_{\omega}$ over a measure-preserving system $(\Omega,\mc{F},\mb{P},\theta)$.  $\Theta:\Omega\times X \rightarrow \Omega \times X$ is the induced   skew product transformation.  Bogenschutz \cite{Bog} and Kifer \cite{kfier2001}  established the following  classical variational principle for random dynamical systems:
\begin{align*}
h_{top}^{{\bf r}}(T)= \sup\left\lbrace h_{\mu}^{{\bf r}}(T): \mu~ \text{is}~\Theta\text{-invariant} \right\rbrace,
\end{align*}
where $h_{\mu}^{{\bf r}}(T)$  and $h_{top}^{{\bf r}}(T)$ are the  measure-theoretic entropy and  topological entropy of random dynamical system, respectively. Ma, Yang and Chen \cite{ma2017} introduced the mean dimension and metric mean dimension for random   dynamical systems. Based on the work \cite{LWE,Gut,Shi,ma2017}, a natural question is  whether we can establish variational principle for metric mean dimension of random dynamical systems or not. So  we aim to  formulate variational principle for   metric mean dimension in the framework of continuous bundle random dynamical systems. On the  one hand, we address that  the role of measure-theoretic entropy is replaced by  the candidates of measure-theoretic $\epsilon$-entropies according to the definition of metric mean dimension. Borrowed the ideas of \cite{Gut, Shi}, we  give four types of measure-theoretic $\epsilon$-entropies, called  measure-theoretic entropy of partitions decreasing in diameter, Shapira's entropy, Katok's  $\epsilon$-entropy and Brin-Katok local $\epsilon$-entropy.  On the other hand, different from the techniques  used  for topological entropy, the local variational principles \cite{Bland, Huang, Gla, Ma, Zhang}  are the main  ingredient in our proof of  variational principles. Besides, more efforts are need   for the measurability of measure-theoretic $\epsilon$-entropies.

 This paper is organized as follows. In section \ref{sec1}, we recall the settings and  related notions of random dynamical systems  and the definition of  metric mean dimension for continuous bundle random dynamical systems. In section \ref{var},  we  prove the main results: Theorem \ref{main}, Theorem \ref{main1}, Theorem \ref{main2} and Theorem \ref{main3}.

\section{Preliminaries}\label{sec1}
\subsection{ The setup  of RDSs}
In this subsection, we recall the settings and  related notions of random dynamical systems  of continuous bundle random dynamical systems  investigated in \cite{AR,Cru,Kifer}.
 
Let  $(\Omega,\mathscr{F},\mathbb{P}, \theta)$ be a measure-preserving system, where $(\Omega, \mathscr{F}, \mathbb{P})$ is  countably generated probability space and $\theta$ is  invertible
measure-preserving transformation. We always assume that $\mathscr{F}$ is complete, countably generated, and separated points. Hence $(\Omega,\mathscr{F},\mathbb{P})$ is a Lebesgue space. Let $X$ be a compact metric space with the Borel $\sigma$-algebra  $\mathscr{B}_{X}$.  This  endows $\Omega \times X$ with the product $\sigma$-algebra $\mathscr{F}\otimes \mathscr{B}_X$.  For a measurable subset $\mc{E}\subset \Omega\times X$,   the fibers $\mc{E}_{\omega}=\left\lbrace x\in X: (\omega,x)\in \mc{E}\right\rbrace $ with $\omega\in \Omega$ are non-empty  compact subsets of $X$. A \emph{continuous bundle random dynamical system} (RDS for short) over $(\Omega, \mathcal{F}, \mathbb{P}, \theta)$ is generated by mappings $T_{\omega}: \mathcal{E}_{\omega}\rightarrow \mathcal{E}_{\theta \omega}$ with iterates
\begin{eqnarray}
T_{\omega}^{n}=\left\{\begin{array}{rcl}
&	T_{\theta^{n-1}\omega}\circ\cdots \circ T_{\theta \omega}\circ T_{\omega},&\text{if}~ n>0\\
	&id, &   \text{if}~ n=0
\end{array}\right.
\end{eqnarray}
so that  $(\omega, x)\mapsto T_{\omega}x$ is measurable and  $x\mapsto T_{\omega}x$ is continuous for $\mathbb{P}$-almost  all $\omega$. The map $\Theta:\mathcal{E}\rightarrow \mathcal{E}$ defined by $\Theta(\omega,x)=(\theta\omega, T_{\omega}x)$ is called the \emph{skew product transformation}.

By  $\mathcal{P}_{\mathbb{P}}(\Omega\times X)$  we denote  the space of probability measures on $\Omega\times X$ with the marginal $\mathbb{P}$ on $\Omega$. Let $\mathcal{P}_{\mathbb{P}}(\mathcal{E})=\left\lbrace \mu 
\in \mathcal{P}_{\mathbb{P}}(\Omega\times X): \mu(\mathcal{E})=1 \right\rbrace $.  It  is well-known that  \cite{Kifer} $\mu\in \mathcal{P}_{\mathbb{P}}(\mathcal{E})$ on $\mathcal{E}$ can be 
disintegrated  as $d \mu(\omega, x)=d \mu_{\omega}(x) d \mb{P}(\omega)$, where $\mu_{\omega}$ is the  regular conditional probabilities w.r.t.  the $\sigma$-algebra $\mathcal{F}_{\mathcal{E}}$ formed by all sets $(A\times X)\cap \mathcal{E}$ with $A\in \mathcal{F}$. The set of  $\Theta$-invariant measures $\mu\in \mathcal{P}_{\mathbb{P}}(\mathcal{E})$ is denoted by $M_{\mathbb{P}}(\mathcal{E})$. By Bogenschutz \cite{Bog}, the measure $\mu\in M_{\mathbb{P}}(\mathcal{E})$ if and only if $T_{\omega} \mu_{\omega}=\mu_{\theta \omega}$  for $\mb{P}$-a.e $\omega$. And the set of ergodic elements in $M_{\mathbb{P}}(\mathcal{E})$  is denoted by   $E_{\mathbb{P}}(\mathcal{E}) $.

\subsection{Metric mean dimension of RDSs}
In this subsection, we recall the definitions of  topological entropy \cite{Bog,Kifer} and metric mean dimension  introduced by Ma et al. \cite{ma2017} for continuous bundle random dynamical systems.

Let $\omega\in \Omega$, $n\in\mathbb{N}$ and $\epsilon>0$. For   each $x,y\in \mc{E}_{\omega}$,  the \emph{nth Bowen metric} $d_n^\omega$ on $\mc{E}_{\omega}$ is defined by 
\begin{equation*}
d_n^\omega(x,y)=\max\{ d( T_\omega^ix,T_\omega^iy ): 0\leq i<n \}.
\end{equation*}
Then the $(n,\epsilon, \omega)$-Bowen ball around $x$  with radius $\epsilon$ in the metric $d_{\omega}^{n}$  is given by 
$$B_{d_{n}^{\omega}}(x, \epsilon)=\left\lbrace y \in \mathcal{E}_{\omega}: d_{n}^{\omega}(x, y)<\epsilon\right\rbrace. $$ 

A set $E\subset\mc{E}_{\omega}$ is said to be  an \emph{$(\omega,\epsilon,n)$-separated set} if  $x,y\in E$, $x\neq y$ implies that $d_n^\omega(x,y)>\epsilon$. The maximum cardinality of $(\omega,\epsilon,n)$-separated sets is denoted by $\sep(\omega,\epsilon,n)$. A subset $F$ of $\mc{E}_{\omega}$ is said to be an \emph{$(\omega, \epsilon, n)$-spanning set} if for any $x\in \mc{E}_{\omega}$, there exists $y\in F$ such that $d_{n}^{\omega}(x, y)\leq \epsilon$. The smallest cardinality of  $(\omega, n, \epsilon)$-spanning sets is denoted by ${\rm span}(\omega, n,\epsilon)$.
Let 
$$S(\omega, \epsilon)=\limsup\limits_{n\rightarrow \infty} \frac{1}{n}\log {\rm sep}(\omega, \epsilon, n).$$
Notice that ${\rm sep}(\omega,\epsilon,n)$ is measurable in $\omega$ \cite[Lemma 2.1]{kfier2001}. The \emph{topological entropy for the  RDS} is defined by
\begin{align}\label{1}
	h_{top}^{\bf r}(T):=  \lim\limits_{\epsilon \rightarrow 0}\int S(\omega, \epsilon) d \mb{P}(\omega)=\sup_{\epsilon >0}\int S(\omega, \epsilon) d \mb{P}(\omega).
\end{align}
Set 
$$	h_{top}^{\bf r}(T,X,d,\epsilon)=\int \limsup\limits_{n\rightarrow \infty} \frac{1}{n}\log {\rm sep}(\omega,\epsilon, n) d \mathbb{P}(\omega).$$ 
Notice that $h_{top}^{\bf r}(T,X,d,\epsilon)$ is  non-decreasing as  $\epsilon\rightarrow 0$. One can   define  a quantity to  measure how fast the $\epsilon$-topological entropy  $h_{top}^{\bf r}(T,X,d,\epsilon)$ converges to  $	h_{top}^{\bf r}(T)$. For this purpose, we define the \emph{upper and lower metric mean dimensions of   $X$  for the RDS $T$} as follows
\begin{align*}
&\mathbb{E}\umdim(T,X,d)=\limsup\limits_{\epsilon \rightarrow 0}\frac{h_{top}^{\bf r}(T,X,d,\epsilon)}{|\log \epsilon|} ,\\
&\mathbb{E}\lmdim(T,X,d)= \liminf\limits_{\epsilon \rightarrow 0}\frac{h_{top}^{\bf r}(T,X,d,\epsilon)}{|\log \epsilon|} .
\end{align*}

Clearly,  the metric mean dimension  depends on the metrics  on $X$ and hence is not topological invariant. Furthermore,  one can deduce that any finite entropy systems have zero metric mean dimension. So  metric mean dimension is a useful quantity to describe the  topological complexity of infinite entropy systems.

\section{Variational principles for metric mean dimension}\label{var}

In this  section, we establish  four variational principles for metric mean dimension. The  main results are Theorem \ref{main}, Theorem \ref{main1}, Theorem \ref{main2} and Theorem \ref{main3}. 
\subsection{Variational principle I:  Kolmogorov-Sinai $\epsilon$-entropy}\label{sec2}
In this subsection, we first the local variational principle for the topological entropy  of a fixed finite open covers in terms of  measure-theoretic  entropy  of a fixed finite open covers given in \cite{Ma, Zhang}. Then we prove the first main result  Theorem \ref{main} by using the local variational principle.

A finite family  $\mathcal{U}=\left\lbrace U_{i}\right\rbrace_{i=1}^{k}$   of measurable subsets  of $\Omega \times X$ is  said to a  \emph{cover} if  $\Omega\times X=\bigcup_{i=1}^{k}U_{i}$,  and for each $i\in \left\lbrace 1,\cdots, k \right\rbrace $ the $\omega$-section $$U_{i}(\omega):=\left\lbrace x\in X: (\omega, x)\in U_{i} \right\rbrace $$ is a Borel set of $X$. This implies that $\mathcal{U}(\omega)=\left\lbrace U_{i}(\omega) \right\rbrace_{i=1}^{k}$ is a Borel cover of $X$. The sets of   \emph{partition} and \emph{open cover} of $\Omega\times X$, denoted by   $\mathcal{P}_{\Omega\times X}$ and $C_{\Omega\times X}^{0}$ respectively,
are the  cover of $\Omega\times X$ whose elements are pairwise disjoint, and the cover of $\Omega\times X$ whose elements are open sets.  Specially,   by $C_{\Omega\times X}^{0'}$ we denote the set of $\mc{U}\in C_{\Omega\times X}^{0}$ formed by $\mc{U}=\left\lbrace \Omega\times U_{i} \right\rbrace $ with   the finite  open cover $\left\lbrace U_{i}\right\rbrace $  of $X$. The notions $\mc{P}_{\mc{E}}$, $C_{\mc{E}}$, $C_{\mc{E}}^{0}$ and $C_{\mc{E}}^{0'}$   denote the restriction  of $\mc{P}_{\Omega\times X}$, $C_{\Omega\times X}$, $C_{\Omega\times X}^{0}$  and $C_{\Omega\times X}^{0'}$ on $\mc{E}$, respectively.   Given the covers  $\xi\in {C}_{\Omega}$ and $\mathcal{W}\in {C}_{X}$, we  sometimes write $(\Omega\times \mathcal{W})_{\mathcal{E}}=\{(\Omega \times W)\cap \mathcal{E}: W \in\mathcal{W} \}$ and $(\xi\times X)_{\mathcal{E}}=\{(A \times X)\cap \mathcal{E}: A\in\xi \}$.

Given two covers $\mathcal{U}$, $\mathcal{V}\in C_{\Omega\times X}$, $\mathcal{U}$ is said to be \emph{finer} than  $\mathcal{V}$ (write $\mathcal{U}\succeq \mathcal{V}$) if each element of $\mathcal{U}$ is contained in some element of $\mathcal{V}$. The join of  $\mathcal{U}$ and $\mathcal{V}$  is defined by  $\mathcal{U}\vee\mathcal{V}=\left\lbrace U\cap V: U\in \mathcal{U}, V\in \mathcal{V}\right\rbrace $. For $a, b\in \mathbb{N}$ with  $a\leq b$ and $\mathcal{U}\in C_{\Omega\times X}$, we set 
$\mathcal{U}_{a}^{b}=\bigvee\limits_{n=a}^{b}\Theta^{-n}\mathcal{U}.$ Let $x\in X$ and $\xi \in \mathcal{P}_\Omega$.
 Let $\xi(\omega)=\{A\cap \mathcal{E}_\omega:A\in \xi\}$ be the $\omega$-section of $\xi$ and$A_{\xi, \omega}^{n}(x)$  be  the member of the partition $\bigvee_{i=0}^{k-1} (T_{\omega}^{i})^{-1}\xi(\theta^{i}{\omega})$ to which $x$ belongs.

 Let $\mathcal{R}=\left\lbrace {R}_{i}\right\rbrace $ be a finite measurable  partition of $\mathcal{E}$ and ${R}_{i}(\omega)=\left\lbrace x \in \mathcal{E}_{\omega}: (\omega, x)\in {R}_{i} \right\rbrace $. Then   $\mathcal{R}(\omega)=\left\lbrace {R}_{i}(\omega) \right\rbrace $  is a  finite partition of $\mathcal{E}_{\omega}$. Set $\mathcal{F}_{\mathcal{E}}=\left\lbrace(A\times X)\cap \mathcal{E}: A\in \mathscr{F} \right\rbrace $.
The \emph{condition entropy} of $\mathcal{R}$  for  the given  $\sigma$-algebra $\mathcal{F}_{\mathcal{E}}$ is defined by
\begin{align*}
H_{\mu}(\mathcal{R}|\mathcal{F}_{\mathcal{E}})=-\int \sum_{i} \mu(R_{i}|\mathcal{F}_{\mathcal{E}})\log \mu (R_{i}|\mathcal{F}_{\mathcal{E}}) d \mathbb{P}(\omega)=\int H_{\mu_{\omega}}(\mathcal{R}(\omega)) d \mathbb{P}(\omega),	
\end{align*}
where $H_{\mu_{\omega}}(P)$ denotes the usual  partition entropy of $P$.  
Let  $\mu\in M_{\mathbb{P}}(\mathcal{E})$,  $\xi \in \mc{P}_{\mc{E}}$ and define 
\begin{align*}
h_{\mu}^{\bf r}({T}, \xi)&=\lim\limits_{n\rightarrow \infty}\frac{1}{n}H_{\mu}\left( \bigvee_{i=0}^{n-1}(\Theta^{i})^{-1}\xi|\mathcal{F}_{\mathcal{E}}\right) \\&=\lim\limits_{n\rightarrow \infty} \dfrac{1}{n} \int H_{\mu_{\omega}}\left( \bigvee_{i=0}^{n-1}(T_{\omega}^{i})^{-1} \xi(\theta^{i}\omega)\right)  d \mb{P}(\omega),
\end{align*} 
where the  limit exists dues to the  subadditivity of conditional entropy\cite{Kifer}.  If  $\mathbb{P}$ is ergodic,
then  $	h_{\mu}^{\bf r}(T,\xi)=\lim\limits_{n\rightarrow \infty} \dfrac{1}{n} H_{\mu_{\omega}}\left( \bigvee_{i=0}^{n-1}(T_{\omega}^{i})^{-1} \xi(\theta^{i}\omega)\right) $  for $\mathbb{P}$-a.e $\omega$.  

Let $\mathcal{U} \in {C}_{\mathcal{E}}^{0}$ and $\mu\in {M}_{\mathbb P}(T)$. We define the \emph{measure-theoretic entropy of  open cover $\mathcal{U}$ w.r.t. $\mu$}  as 
\begin{align*}
h_{\mu}^{\bf r}(T, \mathcal{U})=\inf_{\alpha\succeq \mathcal{U}, \alpha \in \mathcal{P}_{\mathcal{E}}} h_{\mu}^{\bf r}(T, \alpha).
\end{align*}
For each $\mathcal{U}\in {C}_{\mathcal{E}}^{0'}$, it is  not difficult to verify (see \cite{Kifer, Bog, Zhang}) that infimum above can only  take over  the partitions $Q$ of $\mathcal{E}$ into  sets  $Q_{i}$ of  the form $Q_{i}=(\Omega\times P_{i})\cap \mathcal{E}$, where $\mathcal{P}=\left\lbrace P_{i}\right\rbrace $ is a finite  partition of $X$.

Let $\mathcal{U}\in C_{\mathcal{E}}^{0}$, $n\in \mb{N}$ and  $\omega \in \Omega$. Put
\begin{align*}
N(T, \omega, \mathcal{U}, n)=\min \left\lbrace \#F: F ~\text{is the finite subcover of }~\bigvee_{i=0}^{n-1}(T_{\omega}^{i})^{-1}\mathcal{U}(\theta^{i}\omega)~\text{over}~\mathcal{E}_{\omega}\right\rbrace, 
\end{align*}
By the proof of \cite[Proposition 1.6]{kfier2001}, the quantity $N(T,\omega,\mathcal{U}, n)$ is measurable in $\omega$.
The Kingman's subadditive ergodic theorem gives us the following: 
\begin{align}\label{c}
h_{top}^{\bf r}(T, \mathcal{U}):&=\int\lim\limits_{n\rightarrow \infty}\frac{1}{n} \log N(T, \omega, \mathcal{U}, n) d \mathbb{P}(\omega)\\&\nonumber=\lim\limits_{n\rightarrow \infty} \frac{1}{n}\int \log N(T, \omega, \mathcal{U}, n)d \mathbb{P}(\omega),
\end{align}
and  (\ref{c}) remains true  for  $\mathbb{P}$-a.e $\omega$ without taking the integral in the right-hand side if $\mathbb{P}$ is ergodic. 

The authors \cite{Ma, Zhang}  established the following  local variational principle.
\begin{thm}\label{local}
Let $T$ be a continuous bundle RDS over a measure-preserving  system $(\Omega, \mc{F},\mb{P}, \theta)$. If  $\mathcal{U}\in C_{\mathcal{E}}^{0'}$, then 
\begin{align*}
h_{top}^{\bf r}(T, \mathcal{U})=\max\limits_{\mu\in M_{\mathbb{P}}(T)}h_{\mu}^{\bf r}(T, \mathcal{U}).
\end{align*}
Additionally, if $ \mathbb{P}$ is ergodic, then
\begin{align*}
h_{top}^{\bf r}(T, \mathcal{U})=\sup_{\mu \in E_{\mathbb{P}}(T)}h_{\mu}^{\bf r}(T, \mathcal{U}).
\end{align*}
\end{thm}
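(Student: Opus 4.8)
The plan is to establish this as the relativized version of the (non-random) local variational principle, following the scheme of its precursors \cite{Gla,Huang,Ma,Zhang} (Romagnoli, Glasner--Weiss, Huang--Ye, and for the conditional statement Huang--Ye--Zhang) and inserting the modifications forced by the skew product: conditioning everything on $\mathcal{F}_{\mathcal{E}}$, tracking the $\omega$-dependence of all fiberwise objects, and reducing the general case to ergodic $\mathbb{P}$ by ergodic decomposition. First I would record the reduction already noted in the excerpt: since $\mathcal{U}\in C_{\mathcal{E}}^{0'}$, i.e.\ $\mathcal{U}=\{(\Omega\times U_i)\cap\mathcal{E}\}$ for a finite open cover $\{U_i\}$ of $X$, the infimum defining $h_\mu^{\bf r}(T,\mathcal{U})$ may be taken over partitions $Q=(\Omega\times\mathcal{P})_{\mathcal{E}}$ with $\mathcal{P}$ a finite Borel partition of $X$ refining $\{U_i\}$, so all joins become fiberwise joins whose $\omega$-section is $\mathcal{P}(\omega)$ with $\mathcal{P}$ independent of $\omega$. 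Alongside the excerpt's quantity, which is $h_\mu^{+}(\mathcal{U}):=\inf_{\alpha\succeq\mathcal{U},\,\alpha\in\mathcal{P}_{\mathcal{E}}}h_\mu^{\bf r}(T,\alpha)$, I would introduce $h_\mu^{-}(\mathcal{U}):=\lim_{n\to\infty}\frac1n H_\mu^{+}(\mathcal{U}_0^{n-1}\mid\mathcal{F}_{\mathcal{E}})$, where $H_\mu^{+}(\mathcal{V}\mid\mathcal{F}_{\mathcal{E}}):=\inf_{\beta\succeq\mathcal{V},\,\beta\in\mathcal{P}_{\mathcal{E}}}H_\mu(\beta\mid\mathcal{F}_{\mathcal{E}})$; the limit exists since $H_\mu^{+}(\mathcal{V}\vee\mathcal{W}\mid\mathcal{F}_{\mathcal{E}})\le H_\mu^{+}(\mathcal{V}\mid\mathcal{F}_{\mathcal{E}})+H_\mu^{+}(\mathcal{W}\mid\mathcal{F}_{\mathcal{E}})$, which follows by joining refining partitions. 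The one non-random input I would reprove here is the Huang--Ye--Zhang identity $h_\mu^{+}(\mathcal{U})=h_\mu^{-}(\mathcal{U})$: its argument is finitary at each level $n$ and transfers essentially verbatim once one checks that the refining partitions it produces can be chosen measurably in $\omega$ and that the comparisons survive integration over $\Omega$.

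Granting this, the inequality $\sup_{\mu\in M_{\mathbb{P}}(\mathcal{E})}h_\mu^{\bf r}(T,\mathcal{U})\le h_{top}^{\bf r}(T,\mathcal{U})$ is short: for each $n$, pick (measurably in $\omega$) a minimal subcover of $\mathcal{U}_0^{n-1}(\omega)=\bigvee_{i=0}^{n-1}(T_\omega^i)^{-1}\mathcal{U}(\theta^i\omega)$ over $\mathcal{E}_\omega$; it has $N(T,\omega,\mathcal{U},n)$ members, and disjointifying it gives a partition $\beta(\omega)\succeq\mathcal{U}_0^{n-1}(\omega)$ of $\mathcal{E}_\omega$ with $H_{\mu_\omega}(\beta(\omega))\le\log N(T,\omega,\mathcal{U},n)$. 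Hence $H_\mu^{+}(\mathcal{U}_0^{n-1}\mid\mathcal{F}_{\mathcal{E}})\le\int\log N(T,\omega,\mathcal{U},n)\,d\mathbb{P}(\omega)$; dividing by $n$, letting $n\to\infty$ and using $(\ref{c})$ together with the identity of the previous paragraph yields $h_\mu^{\bf r}(T,\mathcal{U})=h_\mu^{-}(\mathcal{U})\le h_{top}^{\bf r}(T,\mathcal{U})$ for every $\mu$. I would also record that, by the standard semicontinuity argument for open covers adapted to the fiberwise conditional setting, $\mu\mapsto H_\mu^{+}(\mathcal{V}\mid\mathcal{F}_{\mathcal{E}})$ is upper semicontinuous for every fixed finite cover $\mathcal{V}$, a fact needed in the reverse direction.

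For $h_{top}^{\bf r}(T,\mathcal{U})\le\max_{\mu}h_\mu^{\bf r}(T,\mathcal{U})$ I would first assume $\mathbb{P}$ ergodic, so that by $(\ref{c})$ and Kingman $\frac1n\log N(T,\omega,\mathcal{U},n)\to a:=h_{top}^{\bf r}(T,\mathcal{U})$ for $\mathbb{P}$-a.e.\ $\omega$. For each $n$, by a measurable selection theorem choose a finite set $E_n(\omega)\subset\mathcal{E}_\omega$, measurable in $\omega$, realizing a collection of $\mathcal{U}_0^{n-1}(\omega)$-names that admits no subcover smaller than (essentially) $N(T,\omega,\mathcal{U},n)$; let $\nu_n\in\mathcal{P}_{\mathbb{P}}(\mathcal{E})$ have fiber measures the uniform probabilities on $E_n(\omega)$, put $\mu_n=\frac1n\sum_{i=0}^{n-1}\Theta_*^i\nu_n$ (defined fiberwise so that $\mu_n\in\mathcal{P}_{\mathbb{P}}(\mathcal{E})$), and let $\mu^*$ be a weak-$*$ limit of a subsequence; then $\mu^*\in M_{\mathbb{P}}(\mathcal{E})$. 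The combinatorial core is the relativized analogue of the Romagnoli/Huang--Ye lemma: because the names witnessed by $E_n(\omega)$ cannot be covered by fewer cover-elements, every partition $\beta(\omega)\succeq\mathcal{U}_0^{n-1}(\omega)$ satisfies $H_{(\nu_n)_\omega}(\beta(\omega))\ge\log N(T,\omega,\mathcal{U},n)-o(n)$, i.e.\ $H_{\nu_n}^{+}(\mathcal{U}_0^{n-1}\mid\mathcal{F}_{\mathcal{E}})\ge\int\log N(T,\omega,\mathcal{U},n)\,d\mathbb{P}-o(n)$. Feeding this into the Misiurewicz-type convexity estimate along the Ces\`aro average, and using the upper semicontinuity above to pass to the limit as $\mu_n\to\mu^*$, gives $\frac1m H_{\mu^*}^{+}(\mathcal{U}_0^{m-1}\mid\mathcal{F}_{\mathcal{E}})\ge a-\epsilon$ for all large $m$, hence $h_{\mu^*}^{\bf r}(T,\mathcal{U})=h_{\mu^*}^{-}(\mathcal{U})\ge a$; with the upper bound this gives equality, and $\mu^*$ is the maximizer. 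For general $\mathbb{P}$, write $\mathbb{P}=\int\mathbb{P}_x\,d\tau(x)$ for the ergodic decomposition; by $(\ref{c})$, $h_{top}^{\bf r}(T,\mathcal{U})=\int h_{top}^{{\bf r},\mathbb{P}_x}(T,\mathcal{U})\,d\tau(x)$, and choosing measurably in $x$ a maximizer $\mu_x\in M_{\mathbb{P}_x}(\mathcal{E})$ and integrating gives $\mu=\int\mu_x\,d\tau\in M_{\mathbb{P}}(\mathcal{E})$ with $h_\mu^{\bf r}(T,\mathcal{U})\ge\int h_{\mu_x}^{\bf r}(T,\mathcal{U})\,d\tau(x)$ (from the refining-partition definition and concavity of conditional entropy in the measure), whence equality with the maximum attained at $\mu$. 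The ergodic refinement (second assertion, $\mathbb{P}$ ergodic) follows from the same mechanism, which can be arranged so that the limiting measure is carried by a minimal subsystem or an independence set, making some ergodic component have local entropy $\ge a-\epsilon$.

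The step I expect to be the real obstacle is the relativized combinatorial lemma together with the measurable bookkeeping it demands: one must extract the extremal name-configurations $E_n(\omega)$, and the auxiliary partitions used in the Romagnoli/Huang--Ye counting, so that all constants appearing are measurable functions of $\omega$ and the fiberwise lower bound $H_{(\nu_n)_\omega}(\beta(\omega))\ge\log N(T,\omega,\mathcal{U},n)-o(n)$ is uniform enough to integrate against $\mathbb{P}$. Closely related is the need for the upper semicontinuity of $\mu\mapsto H_\mu^{+}(\mathcal{V}\mid\mathcal{F}_{\mathcal{E}})$ (the boundary-control lemma for partitions refining an open cover, now in the fiberwise conditional setting), which is exactly what lets the lower bound survive the weak-$*$ limit. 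A secondary difficulty is that $\mu\mapsto h_\mu^{\bf r}(T,\mathcal{U})$ is only concave and upper semicontinuous, not affine, so the transitions between the ergodic-$\mathbb{P}$ case, the general case, and especially the sharpening to ergodic measures cannot be done by a one-line ergodic-decomposition reduction and must go through the weak-$*$ limit construction and the Jensen-type inequality above.
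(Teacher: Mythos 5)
First, a point of comparison: the paper does not prove Theorem~\ref{local} at all --- it is quoted as a known result of \cite{Ma,Zhang}, so there is no in-paper argument to measure your proposal against. What you have written is a plausible reconstruction of the strategy actually used in those references: the relativization of the Romagnoli/Huang--Ye local variational principle, built on (i) the reduction of the infimum to partitions of the form $(\Omega\times\mathcal{P})_{\mathcal{E}}$, (ii) the identity $h_{\mu}^{+}(\mathcal{U})=h_{\mu}^{-}(\mathcal{U})$ between the two candidate definitions of $h_{\mu}^{\bf r}(T,\mathcal{U})$, (iii) disjointification of a measurably chosen minimal subcover for the easy inequality, and (iv) extremal atomic random measures, Misiurewicz averaging, and upper semicontinuity of $\mu\mapsto H_{\mu}^{+}(\mathcal{V}\mid\mathcal{F}_{\mathcal{E}})$ for the hard inequality, followed by ergodic decomposition of $\mathbb{P}$. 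This is the right architecture, and your use of \eqref{c} and of the Kingman-theorem normalization matches how the cited works set things up. But as a proof it is an outline: the three load-bearing ingredients --- the combinatorial counting lemma giving $H_{(\nu_n)_\omega}(\beta(\omega))\ge\log N(T,\omega,\mathcal{U},n)-o(n)$ for all $\beta(\omega)\succeq\mathcal{U}_0^{n-1}(\omega)$, its measurable-in-$\omega$ selection, and the upper semicontinuity of $H_{\mu}^{+}(\cdot\mid\mathcal{F}_{\mathcal{E}})$ in the appropriate narrow topology on $\mathcal{P}_{\mathbb{P}}(\mathcal{E})$ --- are asserted rather than established, and you say so yourself. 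In a paper one would either reprove them (they occupy most of \cite{Ma} and the relevant chapter of \cite{Zhang}) or, as the present authors do, simply cite.

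There is one concrete misstep worth flagging: your mechanism for the ergodic refinement. Concavity of $\mu\mapsto h_{\mu}^{\bf r}(T,\mathcal{U})$ and the ``Jensen-type inequality'' give $h_{\mu}^{\bf r}(T,\mathcal{U})\ge\int h_{\mu_x}^{\bf r}(T,\mathcal{U})\,dm(x)$ over the ergodic decomposition $\mu=\int\mu_x\,dm(x)$, which is the \emph{wrong} direction for concluding that some ergodic component nearly attains the maximum; and the suggestion that the limit measure can be ``carried by a minimal subsystem or an independence set'' is not how any of the precursor arguments proceed. What is actually needed is the reverse inequality $h_{\mu}^{\bf r}(T,\mathcal{U})\le\int h_{\mu_x}^{\bf r}(T,\mathcal{U})\,dm(x)$, i.e.\ affinity of $\mu\mapsto h_{\mu}^{\bf r}(T,\mathcal{U})$ on ergodic decompositions, which in \cite{Huang,Zhang} is extracted from the identity $h_{\mu}^{+}=h_{\mu}^{-}$ (the $h^{-}$ form being the one that behaves affinely); alternatively one runs the Misiurewicz construction so that the ergodic components of the limit measure inherit the entropy bound on average and then selects a good component. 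Either route closes the gap, but as written your second assertion does not follow from the tools you invoke.
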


Given a finite open cover  $\mc{U}$ of $X$,  by  ${\rm diam}(\mc{U})$  we denote the  \emph{diameter} of  $\mc{U}$ , that is, the maximal diameter  of the elements of  $\mc{U}$. The Lebesgue number of  $\mc{U}$, denoted by ${\rm Leb}(\mc{U})$,  is  the largest  positive number $\delta$ with the property that every open ball of $X$ with radius  $\delta$ is contained in an element of $\mc{U}$. 

\begin{lem} \label{key}  \rm{\cite[Lemma 3.4]{Gut}}
Let $(X,d)$ be a compact metric space. Then for every $\epsilon>0$, there exists a finite open cover $\mathcal{U}$ of $X$ such that ${\rm diam}(\mc{U})\leq\epsilon$ and $\rm{Leb}(\mc{U})\geq \frac{\epsilon}{4}$.
\end{lem}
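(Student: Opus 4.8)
The plan is to build $\mc{U}$ directly from an $(\epsilon/4)$-net of $X$. Fix a subset $\{x_1,\dots,x_N\}\subseteq X$ of maximal cardinality among those satisfying $d(x_i,x_j)>\epsilon/4$ for all $i\ne j$, i.e. a maximal $(\epsilon/4)$-separated set. Such a set exists and is finite: since $(X,d)$ is compact it is totally bounded, so it can be covered by finitely many balls of radius $\epsilon/8$, and each such ball meets any $(\epsilon/4)$-separated set in at most one point; hence the cardinalities of $(\epsilon/4)$-separated sets are bounded above, a maximum is attained, and a set attaining it is automatically maximal for inclusion. Then set $\mc{U}=\{B(x_i,\epsilon/2):1\le i\le N\}$, a finite family of open subsets of $X$.

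Next I would check the three required properties. First, $\mc{U}$ is a cover of $X$. Given $y\in X$, if we had $d(y,x_i)>\epsilon/4$ for every $i$, then (noting $y\ne x_i$ for all $i$ since $d(y,x_i)>0$) the set $\{x_1,\dots,x_N,y\}$ would still be $(\epsilon/4)$-separated and strictly larger, contradicting maximality; so there is an index $i$ with $d(y,x_i)\le\epsilon/4<\epsilon/2$, whence $y\in B(x_i,\epsilon/2)$. Second, $\diam(\mc{U})\le\epsilon$: for any $u,v\in B(x_i,\epsilon/2)$ the triangle inequality gives $d(u,v)\le d(u,x_i)+d(x_i,v)<\epsilon/2+\epsilon/2=\epsilon$, so every element of $\mc{U}$ has diameter at most $\epsilon$.

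Third — the estimate that fixes the constant $\epsilon/4$ — I would show $\mathrm{Leb}(\mc{U})\ge\epsilon/4$. Let $y\in X$ and consider the open ball $B(y,\epsilon/4)$. By the covering argument of the previous paragraph, pick $i$ with $d(y,x_i)\le\epsilon/4$. Then for every $z\in B(y,\epsilon/4)$ we have $d(z,x_i)\le d(z,y)+d(y,x_i)<\epsilon/4+\epsilon/4=\epsilon/2$, so $B(y,\epsilon/4)\subseteq B(x_i,\epsilon/2)\in\mc{U}$. Since every open ball of radius $\epsilon/4$ lies inside some member of $\mc{U}$, the Lebesgue number of $\mc{U}$ is at least $\epsilon/4$, completing the proof.

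There is no substantial obstacle here; the only point requiring care is the bookkeeping of strict versus non-strict inequalities, so that the density bound $d(y,x_i)\le\epsilon/4$ produced by maximality of the separated set is simultaneously compatible with the non-strict diameter bound $\diam(\mc{U})\le\epsilon$ and the non-strict Lebesgue-number bound $\mathrm{Leb}(\mc{U})\ge\epsilon/4$. Choosing open balls of radius exactly $\epsilon/2$ centered at the $(\epsilon/4)$-net makes all three estimates close at once.
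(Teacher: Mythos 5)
Your proof is correct and is exactly the argument the paper intends: the paper's one-line proof says "consider a $\frac{\epsilon}{4}$-net," and you have simply carried out that construction in full, taking the cover by $\epsilon/2$-balls centered at a maximal $(\epsilon/4)$-separated set and verifying the diameter and Lebesgue-number bounds. No difference in approach, only in the level of detail.
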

\begin{proof}
It follows by  considering  a $\frac{\epsilon}{4}$-net.
\end{proof}
\begin{lem}\label{ss}
Let   $\sigma=\left\lbrace A_{i}\right\rbrace $  be  a finite open cover  of $X$. Let $\mathcal{U}=(\Omega\times \sigma)_{\mathcal{E}}=\left\lbrace (\Omega\times A_{i}) \cap \mathcal{E}: A_i\in \sigma \right\rbrace $ be a finite open cover of $\mathcal{E}$. Then for  each fixed $\omega$,
\begin{align}\label{2}
S(\omega,{\rm diam}(\sigma) ) \leq \lim\limits_{n\rightarrow \infty}\frac{1}{n}\log N(T, \omega, \mathcal{U},n) \leq  S(\omega, \rm{Leb}(\sigma)).
\end{align}
\end{lem}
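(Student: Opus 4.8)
The plan is to establish the two inequalities in (\ref{2}) separately by a standard comparison between covers and separated/spanning sets, carried out fiberwise for a fixed $\omega$ and a fixed word length $n$. The key observation is that the Bowen metric $d_n^\omega$ interacts with the dynamical refinement $\bigvee_{i=0}^{n-1}(T_\omega^i)^{-1}\mathcal{U}(\theta^i\omega)$ exactly as the metric $d$ interacts with the single cover $\sigma$ on each coordinate; this is why ${\rm diam}(\sigma)$ and ${\rm Leb}(\sigma)$ govern both sides. Concretely, note that $\mathcal{U}(\theta^i\omega)=\sigma$ for every $i$ since $\mathcal{U}=(\Omega\times\sigma)_{\mathcal E}$, so the refined cover of $\mathcal E_\omega$ consists of sets of the form $\bigcap_{i=0}^{n-1}(T_\omega^i)^{-1}A_{j_i}$, whose $d_n^\omega$-diameter is at most ${\rm diam}(\sigma)$.

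For the \emph{right-hand} inequality, I would start from a maximal $(\omega,{\rm Leb}(\sigma),n)$-separated set $E\subset\mathcal E_\omega$ of cardinality ${\rm sep}(\omega,{\rm Leb}(\sigma),n)$. For each $x\in E$ and each $0\le i<n$, the ball $B_d(T_\omega^i x,{\rm Leb}(\sigma))$ lies in some element $A_{j_i(x)}\in\sigma$, so $x$ lies in the refined set $V(x):=\bigcap_{i=0}^{n-1}(T_\omega^i)^{-1}A_{j_i(x)}$. If two points $x\ne y$ of $E$ gave the same refined set, i.e. $V(x)=V(y)$ with the same index string, then $d(T_\omega^i x,T_\omega^i y)< 2\,{\rm Leb}(\sigma)\le{\rm diam}(\sigma)$... — this needs care, so instead I argue: since $E$ is $(\omega,{\rm Leb}(\sigma),n)$-separated, any single element of the refined cover contains at most one point of $E$ provided its $d_n^\omega$-diameter is $\le{\rm Leb}(\sigma)$; as just noted the $d_n^\omega$-diameter is $\le{\rm diam}(\sigma)$, which is not quite $\le{\rm Leb}(\sigma)$. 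The clean way is: a cover of $\mathcal E_\omega$ by sets of $d_n^\omega$-diameter $<{\rm Leb}(\sigma)$... — this is the technical pinch point. The correct route is the classical one: ${\rm sep}(\omega,\epsilon,n)\le N(\text{cover by sets of }d_n^\omega\text{-diam}<\epsilon)$, and every refined element has $d_n^\omega$-diameter $\le{\rm diam}(\sigma)$, so together with a minimal subcover one gets ${\rm sep}(\omega,{\rm diam}(\sigma),n)\le N(T,\omega,\mathcal U,n)$ — wait, that is the \emph{left} inequality. Let me reorganize: the \textbf{left} inequality ${\rm sep}(\omega,{\rm diam}(\sigma),n)\le N(T,\omega,\mathcal U,n)$ follows because any $d_n^\omega$-ball of radius larger than ${\rm diam}(\sigma)$ meets each refined element in at most one point of a $({\rm diam}(\sigma),n)$-separated set, so a minimal subcover already bounds the separated set. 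The \textbf{right} inequality $N(T,\omega,\mathcal U,n)\le {\rm span}(\omega,{\rm Leb}(\sigma)/?,n)$-type bound: given a minimal $(\omega,\delta,n)$-spanning set with $\delta<{\rm Leb}(\sigma)$, the $d_n^\omega$-balls of radius $\delta$ around its points cover $\mathcal E_\omega$, and each such ball is contained in a refined element (because in each coordinate the $d$-ball of radius $\delta\le{\rm Leb}(\sigma)$ sits inside an element of $\sigma$); this yields a subcover of size $\le {\rm span}(\omega,\delta,n)$, hence $N(T,\omega,\mathcal U,n)\le{\rm span}(\omega,\delta,n)\le{\rm sep}(\omega,\delta,n)$ for all $\delta<{\rm Leb}(\sigma)$.

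Taking $\frac1n\log(\cdot)$ and $\limsup_{n\to\infty}$ of these fiberwise cardinality inequalities, and using that the limit defining $\lim_{n}\frac1n\log N(T,\omega,\mathcal U,n)$ exists (Kingman, as recorded before (\ref{c})), I get
\begin{align*}
\limsup_{n\to\infty}\tfrac1n\log{\rm sep}(\omega,{\rm diam}(\sigma),n)
\ \le\ \lim_{n\to\infty}\tfrac1n\log N(T,\omega,\mathcal U,n)
\ \le\ \limsup_{n\to\infty}\tfrac1n\log{\rm sep}(\omega,\delta,n)
\end{align*}
for every $\delta<{\rm Leb}(\sigma)$. The left end is exactly $S(\omega,{\rm diam}(\sigma))$ by definition, and the right end is $S(\omega,\delta)$; letting $\delta\uparrow{\rm Leb}(\sigma)$ and using that $S(\omega,\epsilon)$ is monotone (non-increasing) in $\epsilon$ — so $\sup_{\delta<{\rm Leb}(\sigma)}S(\omega,\delta)= \lim_{\delta\uparrow{\rm Leb}(\sigma)}S(\omega,\delta)\le S(\omega,{\rm Leb}(\sigma))$ if $S$ is left-continuous, but in general one only needs $S(\omega,\delta)\le S(\omega,{\rm Leb}(\sigma))$ already for each such $\delta$ since $\delta<{\rm Leb}(\sigma)$ and $S$ is non-increasing — gives the right-hand bound $S(\omega,{\rm Leb}(\sigma))$. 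This completes (\ref{2}).

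\emph{Main obstacle.} The only delicate point is pinning down the correct constants in the cover-vs.-spanning/separated comparisons so that one genuinely lands on ${\rm diam}(\sigma)$ on the left and ${\rm Leb}(\sigma)$ on the right, rather than on $2\,{\rm diam}(\sigma)$ or ${\rm Leb}(\sigma)/2$; this is handled by using strict versus non-strict inequalities in the definitions of separated ($d_n^\omega>\epsilon$) and spanning ($d_n^\omega\le\epsilon$) sets and by the fact that refined elements have $d_n^\omega$-diameter $\le{\rm diam}(\sigma)$ while $d$-balls of radius $\le{\rm Leb}(\sigma)$ sit inside single elements of $\sigma$. Everything else is routine, and measurability in $\omega$ is not an issue here since $\omega$ is fixed throughout the lemma.
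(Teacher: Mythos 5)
Your overall strategy coincides with the paper's: the paper proves the lemma in one line from the fiberwise cardinality bound ${\rm sep}(\omega,{\rm diam}(\sigma),n)\le N(T,\omega,\mathcal{U},n)\le{\rm sep}(\omega,{\rm Leb}(\sigma),n)$ and then takes $\frac1n\log$ and limits in $n$. Your left-hand inequality is correct and is exactly the paper's: each element of $\bigvee_{i=0}^{n-1}(T_\omega^i)^{-1}\sigma$ has $d_n^\omega$-diameter at most ${\rm diam}(\sigma)$, so it contains at most one point of any $(\omega,{\rm diam}(\sigma),n)$-separated set, whence a minimal subcover dominates the separated cardinality.

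The gap is in the final step of your right-hand inequality. You correctly derive $N(T,\omega,\mathcal{U},n)\le{\rm span}(\omega,n,\delta)\le{\rm sep}(\omega,\delta,n)$ for every $\delta<{\rm Leb}(\sigma)$, hence $\lim_n\frac1n\log N(T,\omega,\mathcal{U},n)\le S(\omega,\delta)$. But you then assert that $S(\omega,\delta)\le S(\omega,{\rm Leb}(\sigma))$ ``since $\delta<{\rm Leb}(\sigma)$ and $S$ is non-increasing'' --- this is backwards. Because ${\rm sep}(\omega,\epsilon,n)$ is non-increasing in $\epsilon$, so is $S(\omega,\cdot)$, and therefore $S(\omega,\delta)\ge S(\omega,{\rm Leb}(\sigma))$ for $\delta<{\rm Leb}(\sigma)$: the bound you obtain is \emph{weaker} than the one claimed in the lemma, and letting $\delta\uparrow{\rm Leb}(\sigma)$ only produces $\lim_{\delta\uparrow{\rm Leb}(\sigma)}S(\omega,\delta)$, which dominates $S(\omega,{\rm Leb}(\sigma))$ rather than being dominated by it. The repair is to run the spanning argument at $\delta={\rm Leb}(\sigma)$ itself: the paper's definition of spanning uses $d_n^\omega(x,y)\le\epsilon$, a maximal $(\omega,{\rm Leb}(\sigma),n)$-separated set is $(\omega,{\rm Leb}(\sigma),n)$-spanning, and each ball $B_d(T_\omega^i y,{\rm Leb}(\sigma))$ lies inside a single element of $\sigma$ by the definition of the Lebesgue number; this yields directly $N(T,\omega,\mathcal{U},n)\le{\rm sep}(\omega,{\rm Leb}(\sigma),n)$, which is exactly the inequality the paper invokes. (The residual open-versus-closed-ball pedantry at radius exactly ${\rm Leb}(\sigma)$ is present in the paper's version too and is conventionally ignored.)
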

\begin{proof}
One can get the desired result by  using
 $${\rm sep}(\omega, {\rm diam}(\sigma), n)\leq N(T, \omega,\mathcal{U}, n) \leq {\rm sep} (\omega, {\rm Leb}(\mathcal{\sigma}), n).$$ 
\end{proof}

\begin{thm}\label{main}
Let $T$ be a continuous bundle RDS over a measure-preserving  system $(\Omega, \mc{F},\mb{P}, \theta)$. Then
\begin{align*}
&\mb{E}\umdim(T,X,d)=\limsup\limits_{\epsilon\rightarrow 0} \frac{1}{|\log \epsilon|} \sup_{\mu \in {M}_{\mathbb{P}}(T)} \inf_{\substack{{\rm diam}(\alpha)\leq \epsilon,\\ \alpha\in\mc{P}_{X}}} h_{\mu}^{\bf r}(T, (\Omega\times\alpha)_{\mathcal{E}}),\\&
\mb{E}\lmdim(T,X,d)=\liminf\limits_{\epsilon\rightarrow 0} \frac{1}{|\log \epsilon|} \sup_{\mu \in {M}_{\mathbb{P}}(T)} \inf_{\substack{{\rm diam}(\alpha)\leq {\epsilon},\\ \alpha\in\mc{P}_{X}}} h_{\mu}^{\bf r}(T, (\Omega\times \alpha)_{\mathcal{E}}).
\end{align*}
Additionally, if $(\Omega, \mc{F}, \mb{P}, \theta)$ is ergodic, then   the results   are also valid by changing the supremums  into $\sup_{\mu \in {E}_{\mathbb{P}}(T)}$.
\end{thm}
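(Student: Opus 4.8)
The plan is to reduce the statement to Lemma \ref{ss} and the local variational principle (Theorem \ref{local}), using Lemma \ref{key} to bridge between diameter and Lebesgue number. First I would fix $\epsilon>0$ and a finite open cover $\sigma=\{A_i\}$ of $X$ with $\operatorname{diam}(\sigma)\le\epsilon$, and set $\mathcal{U}=(\Omega\times\sigma)_{\mathcal{E}}\in C_{\mathcal{E}}^{0'}$. Integrating the inequalities of Lemma \ref{ss} over $\Omega$ and recalling the definitions of $h_{top}^{\bf r}(T,X,d,\epsilon)$ and $h_{top}^{\bf r}(T,\mathcal U)$ in \eqref{c}, I get
\[
h_{top}^{\bf r}(T,X,d,\operatorname{diam}(\sigma))\;\le\;h_{top}^{\bf r}(T,\mathcal U)\;\le\;h_{top}^{\bf r}(T,X,d,\operatorname{Leb}(\sigma)).
\]
Now Theorem \ref{local} rewrites the middle term as $\max_{\mu\in M_{\mathbb P}(T)}h_\mu^{\bf r}(T,\mathcal U)$, and for $\mathcal U\in C_{\mathcal E}^{0'}$ the remark after the definition of $h_\mu^{\bf r}(T,\mathcal U)$ says the defining infimum can be taken over partitions of the form $(\Omega\times\alpha)_{\mathcal E}$ with $\alpha\in\mathcal P_X$; moreover one checks $h_\mu^{\bf r}(T,(\Omega\times\alpha)_{\mathcal E})=h_\mu^{\bf r}(T,(\Omega\times\alpha')_{\mathcal E})$ whenever $\alpha\succeq\sigma$, so that $h_\mu^{\bf r}(T,\mathcal U)=\inf_{\alpha\succeq\sigma,\,\alpha\in\mathcal P_X}h_\mu^{\bf r}(T,(\Omega\times\alpha)_{\mathcal E})$.

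The second step is to pass to the quantity $G(\epsilon):=\sup_{\mu}\inf_{\operatorname{diam}(\alpha)\le\epsilon}h_\mu^{\bf r}(T,(\Omega\times\alpha)_{\mathcal E})$ appearing in the theorem. On one hand, for \emph{any} finite partition $\alpha$ of $X$ with $\operatorname{diam}(\alpha)\le\epsilon$ one may take an open cover $\sigma$ refining $\alpha$ (slightly enlarging each piece) with $\operatorname{diam}(\sigma)\le\epsilon$, giving $h_\mu^{\bf r}(T,(\Omega\times\alpha)_{\mathcal E})\ge h_\mu^{\bf r}(T,(\Omega\times\sigma)_{\mathcal E})=h_\mu^{\bf r}(T,\mathcal U)$; taking infimum over such $\alpha$, then sup over $\mu$, and using the left inequality above yields $G(\epsilon)\ge h_{top}^{\bf r}(T,X,d,\epsilon)$ — wait, the direction here needs care, so more honestly: combining the two displayed inequalities with the cover $\mathcal U$ furnished by Lemma \ref{key} (which has $\operatorname{diam}(\sigma)\le\epsilon$ and $\operatorname{Leb}(\sigma)\ge\epsilon/4$) gives
\[
h_{top}^{\bf r}(T,X,d,\epsilon)\;\le\;\sup_{\mu\in M_{\mathbb P}(T)}h_\mu^{\bf r}(T,(\Omega\times\sigma)_{\mathcal E})\;\le\;h_{top}^{\bf r}(T,X,d,\epsilon/4),
\]
and since $h_\mu^{\bf r}(T,(\Omega\times\sigma)_{\mathcal E})=\inf_{\alpha\succeq\sigma}h_\mu^{\bf r}(T,(\Omega\times\alpha)_{\mathcal E})\ge G'(\epsilon)$ where $G'(\epsilon):=\inf_{\operatorname{diam}(\alpha)\le\epsilon}h_\mu^{\bf r}$, while conversely any $\alpha$ with $\operatorname{diam}(\alpha)\le\epsilon$ dominates some open cover of diameter $\le 2\epsilon$ whose entropy sits below $h_{top}^{\bf r}(T,X,d,2\epsilon)$, I obtain the sandwich
\[
h_{top}^{\bf r}(T,X,d,4\epsilon)\;\le\;G(\epsilon)\;\le\;h_{top}^{\bf r}(T,X,d,\epsilon/4)
\]
(possibly with slightly different universal constants). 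Dividing by $|\log\epsilon|$, noting $|\log(c\epsilon)|/|\log\epsilon|\to 1$ as $\epsilon\to0$ for any fixed $c>0$, and taking $\limsup$ (resp.\ $\liminf$) gives exactly the two claimed identities with $\mathbb E\umdim$ and $\mathbb E\lmdim$.

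For the ergodic addendum I would repeat the argument verbatim, invoking the second half of Theorem \ref{local} to replace $\max_{\mu\in M_{\mathbb P}(T)}$ by $\sup_{\mu\in E_{\mathbb P}(T)}$; the squeeze inequalities are insensitive to which of the two measure classes is used. The main obstacle I anticipate is the bookkeeping around open-cover-versus-partition and the comparison constants: establishing cleanly that $\inf$ over partitions of diameter $\le\epsilon$ and $\inf$ over open covers of diameter $\le\epsilon$ (and their Lebesgue numbers) are interleaved up to a fixed multiplicative constant in $\epsilon$, and that refining a cover or partition does not increase $h_\mu^{\bf r}$ in the relevant sense. Once that is pinned down, the passage to the limit is routine since constants wash out under $|\log\epsilon|$. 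A secondary technical point worth a line is measurability — but that is already guaranteed by the measurability of $N(T,\omega,\mathcal U,n)$ and $\operatorname{sep}(\omega,\epsilon,n)$ cited in the preliminaries, so it does not obstruct the proof.
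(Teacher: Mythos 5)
Your proposal is correct and follows essentially the same route as the paper: fix the cover from Lemma \ref{key} with $\operatorname{diam}\le\epsilon$ and $\operatorname{Leb}\ge\epsilon/4$, squeeze $\sup_\mu\inf_{\operatorname{diam}(\alpha)\le\epsilon}h_\mu^{\bf r}$ between $h_{top}^{\bf r}(T,X,d,c\epsilon)$ and $h_{top}^{\bf r}(T,X,d,\epsilon/4)$ via Lemma \ref{ss} and Theorem \ref{local}, and let the constants wash out under division by $|\log\epsilon|$. The only wrinkle is your aside that ``any $\alpha$ dominates some open cover of diameter $\le 2\epsilon$'': for the lower bound on $G(\epsilon)$ you must compare all small-diameter partitions against the \emph{single fixed} cover of large Lebesgue number (as you in fact do, and as the paper does by noting that $\operatorname{diam}(\alpha)\le\epsilon/8$ forces $\alpha\succeq\mathcal{U}$), since a cover depending on $\alpha$ would not survive the interchange of $\sup_\mu$ and $\inf_\alpha$.
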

\begin{proof}
It suffices to show the variational principles hold  for  $\mb{E}\umdim(T,d)$. 
Let  $\epsilon>0$. From Lemma {\ref{key}}, there exists  a finite open cover $\mathcal{U}$ of $X$ such that ${\rm diam}(\mc{U})\leq\epsilon$ and ${\rm Leb}(\mc{U})\geq \frac{\epsilon}{4}$. 

Note that ${\rm diam}(\alpha)\leq\epsilon$ for any  finite Borel partition $\alpha$ of $X$ with  $\alpha\succeq \mathcal{U}$. By Theorem \ref{local}, we obtain 
\begin{align}\label{2.1}
\sup_{\mu \in {M}_{\mathbb{P}}(T)} \inf_{\substack{{\rm diam}(\alpha)\leq\epsilon, \\ \alpha\in \mathcal{P}_{X}}} h_{\mu}^{\bf r}(T, (\Omega\times\alpha)_{\mathcal{E}})\leq \sup_{\mu \in {M}_{\mathbb{P}}(T)} \inf_{\substack{\alpha\succeq \mc{U},\\a\in \mathcal{P}_{X}}} h_{\mu}^{\bf r}(T, (\Omega\times\alpha)_{\mathcal{E}})=h_{top}^{\bf r}(T, (\Omega\times\mc{U})_{\mathcal{E}}).
\end{align} 
Using Lemma \ref{ss}, 
\begin{align}\label{2.2}
h_{top}^{\bf r}(T,(\Omega\times \mc{U})_{\mathcal{E}}) \leq \int S(\omega, {\rm Leb}(\mathcal{U})) d \mathbb{P}(\omega)\leq \int S(\omega, \frac{\epsilon}{4}) d\mathbb{P}(\omega).
\end{align}
It follows from  inequalities  (\ref{2.1}) and (\ref{2.2}) that
\begin{align*}
\sup_{\mu \in {M}_{\mathbb{P}}(T)} \inf_{\substack{{\rm diam}(\alpha)\leq\epsilon, \\ \alpha\in \mathcal{P}_{X}}} h_{\mu}^{\bf r}(T, (\Omega\times\alpha)_{\mathcal{E}})\leq  \int S(\omega, \frac{\epsilon}{4}) d\mathbb{P}(\omega).
\end{align*}
So we get 
$$ \limsup\limits_{\epsilon\rightarrow 0} \frac{1}{|\log \epsilon|} \sup_{\mu \in {M}_{\mathbb{P}}(T)} \inf_{\substack{{\rm diam}(\alpha)\leq \epsilon, \\ \alpha\in\mc{P}_{X}}} h_{\mu}^{\bf r}(T, (\Omega\times\alpha)_{\mathcal{E}}) \leq \mb{E}\umdim(T,X,d).$$

On the other hand, for every   finite Borel partition $\alpha$  of $X$ such that ${\rm diam}(\alpha)\leq \frac{\epsilon}{8}$, one has $\alpha \succ \mathcal{U}$. Then the  Theorem \ref{local} and Lemma \ref{ss} give us
\begin{align*}
\sup_{\mu\in {M}_{\mathbb{P}}(T)}\inf_{\substack{{\rm diam}(\alpha)\leq \frac{\epsilon}{8},\\ \alpha\in\mc{P}_{X}}}h_{\mu}^{\bf r}(T,(\Omega\times\alpha)_{\mathcal{E}})&\geq \sup_{\mu \in {M}_{\mathbb{P}}(T)} \inf_{\substack{\alpha\succeq \mc{U},\\ a\in \mathcal{P}_{X}}}h_{\mu}^{\bf r}(T, (\Omega\times\alpha)_{\mathcal{E}})\\
&=h_{top}^{{\bf r}}(T, (\Omega\times\mathcal{U})_{\mc{E}})\\& \geq \int S(\omega, {\rm diam} (\mathcal{U})) d \mathbb{P}(\omega)\geq \int S(\omega,  \epsilon) d \mathbb{P}(\omega),
\end{align*} 
which  yields the desired results.  If $(\Omega, \mc{F}, \mb{P}, \theta)$ is ergodic, one can get the variational principles by the similar arguments.
\end{proof}
\subsection{Variational principle II: Shapira's  $\epsilon$-entropy}\label{sec3}
Let $\mathcal{U}=\left\lbrace U_{i} \right\rbrace_{i=1}^{k} $ be a finite open cover of $ \mathcal{E}$ and $\mu\in E_{\mathbb{P}}(T)$. Given $\omega\in \Omega$ and $0<\delta<1$, we define
$$N_{\mu_{\omega}}(\mathcal{U}, \delta)=\min \left\lbrace \#I: \mu_{\omega}\left( \bigcup_{i\in I} U_{i}(\omega)\right) > 1-\delta \right\rbrace.$$

\begin{prop}\label{11}
Let $T$ be a continuous bundle RDS over   a measure-preserving system $(\Omega, \mc{F},\mb{P}, \theta)$. Let  $\mathcal{U}\in C_{\mc{E}}^{0}$.	Then the  function $\omega \mapsto N_{\mu_{\omega}}(\mc{U}, \delta)$ is measurable.
\end{prop}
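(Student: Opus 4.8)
The plan is to show that $\omega \mapsto N_{\mu_\omega}(\mathcal{U},\delta)$ is measurable by decomposing the superlevel sets of this integer-valued function in terms of the regular conditional measures $\mu_\omega$, and then invoking standard measurability properties of the disintegration. Since $N_{\mu_\omega}(\mathcal{U},\delta)$ takes values in $\{1,2,\dots,k\}$ where $k = \#\mathcal{U}$, it suffices to check that for each integer $m$ the set $\{\omega : N_{\mu_\omega}(\mathcal{U},\delta) \le m\}$ is $\mathscr{F}$-measurable. By definition, $N_{\mu_\omega}(\mathcal{U},\delta) \le m$ precisely when there exists a subset $I \subset \{1,\dots,k\}$ with $\#I \le m$ such that $\mu_\omega\!\left(\bigcup_{i\in I} U_i(\omega)\right) > 1-\delta$. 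Since there are only finitely many such index sets $I$, we have
\begin{align*}
\{\omega : N_{\mu_\omega}(\mathcal{U},\delta) \le m\} = \bigcup_{\substack{I\subset\{1,\dots,k\}\\ \#I\le m}} \left\{\omega : \mu_\omega\!\left( \bigcup_{i\in I} U_i(\omega)\right) > 1-\delta\right\},
\end{align*}
so the problem reduces to showing that for a fixed finite union $V := \bigcup_{i\in I} U_i$ (a measurable subset of $\mathcal{E}$), the map $\omega \mapsto \mu_\omega(V(\omega))$ is $\mathscr{F}$-measurable, where $V(\omega) = \{x : (\omega,x)\in V\}$ is the $\omega$-section.

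Next I would establish the measurability of $\omega \mapsto \mu_\omega(V(\omega))$ for an arbitrary measurable set $V \in \mathscr{F}\otimes\mathscr{B}_X$ contained in $\mathcal{E}$. The key observation is that $\mu_\omega(V(\omega)) = \mathbb{E}_\mu(\mathbf{1}_V \mid \mathcal{F}_\mathcal{E})(\omega,x)$ for $\mu$-a.e.\ $(\omega,x)$, since the disintegration $d\mu(\omega,x) = d\mu_\omega(x)\,d\mathbb{P}(\omega)$ is exactly the conditional expectation with respect to the $\sigma$-algebra $\mathcal{F}_\mathcal{E} = \{(A\times X)\cap\mathcal{E} : A\in\mathscr{F}\}$, and this conditional expectation is, by construction, $\mathcal{F}_\mathcal{E}$-measurable, hence a function of $\omega$ alone modulo the identification of $\mathcal{F}_\mathcal{E}$ with $\mathscr{F}$ (restricted to $\mathcal{E}$). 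This is precisely the content of the disintegration theorem for the Lebesgue space $(\Omega,\mathscr{F},\mathbb{P})$ as recalled in Section~\ref{sec1}, following Kifer \cite{Kifer}. More explicitly, one can argue by a monotone class / Dynkin system argument: the claim is clear for product sets $V = (A\times B)\cap\mathcal{E}$ with $A\in\mathscr{F}$ and $B\in\mathscr{B}_X$, since then $\mu_\omega(V(\omega)) = \mathbf{1}_A(\omega)\,\mu_\omega(B\cap\mathcal{E}_\omega)$ and $\omega\mapsto\mu_\omega(B\cap\mathcal{E}_\omega)$ is measurable by the very definition of a regular conditional probability; the collection of $V$ for which the claim holds is closed under complements within $\mathcal{E}$ and under countable disjoint unions, so by the $\pi$-$\lambda$ theorem it contains all of $\mathscr{F}\otimes\mathscr{B}_X$ restricted to $\mathcal{E}$.

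The main obstacle is a mild technical one: ensuring that the passage from the conditional-expectation description to a genuine pointwise-in-$\omega$ measurable function is legitimate, i.e.\ that the disintegration can be chosen so that $\omega\mapsto\mu_\omega$ is a bona fide measurable map into the space of probability measures on $X$ (equipped with the weak$^*$ Borel structure), rather than merely being defined $\mathbb{P}$-almost everywhere up to modification. This is guaranteed because $(\Omega,\mathscr{F},\mathbb{P})$ is a Lebesgue space and $X$ is a compact metric space, which is exactly the standing hypothesis; under these assumptions the regular conditional probabilities $\mu_\omega$ exist as a measurable family, and $\omega\mapsto\int f\,d\mu_\omega$ is measurable for every bounded Borel $f$ on $X$, and more generally for every bounded $\mathscr{F}\otimes\mathscr{B}_X$-measurable Carathéodory-type integrand. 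Once this is in hand, the finite-union and finite-index-set reductions above are immediate, and the proof concludes. A secondary point worth noting is that the restriction to $\mathcal{E}$ causes no difficulty, since $\mu(\mathcal{E}) = 1$ and the $\omega$-sections $\mathcal{E}_\omega$ are compact, so all the measure-theoretic statements are unaffected by working inside $\mathcal{E}$ versus $\Omega\times X$.
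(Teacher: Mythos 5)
Your argument is correct and follows essentially the same route as the paper: both reduce the claim, via the finitely many index sets $I\subset\{1,\dots,\#\mathcal{U}\}$, to the measurability of $\omega\mapsto\mu_\omega\bigl(\bigcup_{i\in I}U_i(\omega)\bigr)$ (you use sublevel sets $\{N\le m\}$ where the paper uses level sets $\{N=q\}$, an immaterial difference). The only divergence is that the paper simply cites Crauel (Corollary 3.4) for the measurability of $\omega\mapsto\mu_\omega(V(\omega))$, whereas you supply a self-contained $\pi$--$\lambda$ argument for it; both are fine.
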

\begin{proof}
For every $q>0$, we have
	\begin{align*}
		&\Omega_q:=\left\lbrace \omega: N_{\mu_{\omega}}(\mc{U}, \delta)=q\right\rbrace \\
		=&\bigcup_{\#I=q,\atop
			I \subset \left\lbrace 1,\cdots, \#\mc{U}
			\right\rbrace }\left\lbrace\omega:\mu_{\omega}\left(\bigcup_{i\in I} {U}_{i}(\omega) \right)>1-\delta  \right\rbrace \bigcap \left( \bigcap_{\#J<q, \atop
			J\subset \left\lbrace 1,\cdots, \#\mc{U}\right\rbrace}\left\lbrace \omega: \mu_{\omega}\left(\bigcup_{i\in J} {U}_{i}(\omega) \right)\leq 1-\delta \right\rbrace \right).
	\end{align*}
For each $I\subset  \{1,\cdots, \#\mc{U}\}$, the  graph($A_I$)$=\{(\omega,x):x\in \bigcup_{i\in I} {U}_{i}(\omega) \}=\cup_{i\in I}U_i \cap \mathcal{E}$ is a measurable set of $\Omega \times X$. By \cite[Corollary 3.4]{Cru}, the map $\omega \rightarrow \mu_{\omega}(U_{i}(\omega))$ is measurable. Then  $\Omega_q$ is a measurable set of $\Omega$. This implies that   $\omega\mapsto N_{\mu_{\omega}}(\mc{U}, \delta)$ is measurable since the map only takes finite many values. 
\end{proof} 
Using Proposition \ref{11}, we  can  define \emph{Shapira's entropy of $\mathcal{U}\in C_{\mc{E}}^{0}$ w.r.t. $\mu$} as
\begin{align*}
	&\overline{h}_{\mu}^{S}(T, \mathcal{U}):=  \lim\limits_{\delta \rightarrow 0} \limsup\limits_{n\rightarrow \infty}\dfrac{1}{n}\int  \log  N_{\mu_{\omega}}(\mathcal{U}_{0}^{n-1}, \delta) d \mathbb{P}(\omega).\\&
	\underline{h}_{\mu}^{S}(T, \mathcal{U}):= \lim\limits_{\delta \rightarrow 0}  \liminf\limits_{n\rightarrow \infty}\dfrac{1}{n}\int \log  N_{\mu_{\omega}}(\mathcal{U}_{0}^{n-1}, \delta) d \mathbb{P}(\omega).
\end{align*}

Adapting from the ideas  from \cite{Sha} \cite{Wu}, the following Theorem  establishes  the bridge between Shapira's entropy and measure-theoretic entropy of a fixed finite open cover $\mathcal{U}$ for  random dynamical systems.
\begin{thm}\label{local variational principle}
	Let $T$ be a continuous bundle RDS over a measure-preserving  system $(\Omega, \mc{F},\mb{P}, \theta)$.	Let $\mathcal{U}\in C_\mathcal{E}^{0}$ and $\mu\in E_{\mathbb{P}}(T)$. Then 
	\begin{align*}
		\overline{h}_{\mu}^{S}(T, \mathcal{U})=\underline{h}_{\mu}^{S}(T, \mathcal{U})=h_{\mu}^{\bf r}(T, \mathcal{U}).
	\end{align*} 
\end{thm}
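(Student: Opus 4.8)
The plan is to establish the two-sided bound
$$\overline{h}_{\mu}^{S}(T,\mathcal{U})\leq h_{\mu}^{\bf r}(T,\mathcal{U})\leq \underline{h}_{\mu}^{S}(T,\mathcal{U}),$$
since these, together with the obvious inequality $\underline{h}_{\mu}^{S}(T,\mathcal{U})\leq \overline{h}_{\mu}^{S}(T,\mathcal{U})$ (a $\liminf$ never exceeds the corresponding $\limsup$), pin all three quantities to a common value.

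For the first inequality I would fix a partition $\alpha\in\mathcal{P}_{\mathcal{E}}$ with $\alpha\succeq\mathcal{U}$. Because every atom of $\alpha_{0}^{n-1}(\omega)$ lies in some member of $\mathcal{U}_{0}^{n-1}(\omega)$, any subfamily of atoms of $\alpha_0^{n-1}(\omega)$ carrying $\mu_\omega$-mass larger than $1-\delta$ is absorbed by a subfamily of $\mathcal{U}_0^{n-1}(\omega)$ of no larger cardinality covering the same set; hence $N_{\mu_{\omega}}(\mathcal{U}_{0}^{n-1},\delta)\leq N_{\mu_{\omega}}(\alpha_{0}^{n-1},\delta)$ for every $\omega$. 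The relative Shannon--McMillan--Breiman theorem for the ergodic skew product (Bogenschutz, Kifer) yields the relative Katok formula $\frac1n\log N_{\mu_\omega}(\alpha_0^{n-1},\delta)\to h_\mu^{\bf r}(T,\alpha)$ for $\mathbb{P}$-a.e. $\omega$ and every $\delta\in(0,1)$; since this sequence is bounded by $\log\#\alpha$, bounded convergence upgrades it to $\frac1n\int\log N_{\mu_\omega}(\alpha_0^{n-1},\delta)\,d\mathbb{P}\to h_\mu^{\bf r}(T,\alpha)$. Combining with the pointwise comparison, taking the infimum over $\alpha\succeq\mathcal{U}$, and then letting $\delta\to0$ gives $\overline{h}_\mu^S(T,\mathcal{U})\le \inf_{\alpha\succeq\mathcal{U}}h_\mu^{\bf r}(T,\alpha)=h_\mu^{\bf r}(T,\mathcal{U})$.

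For the second inequality I would use the relative Romagnoli identity $h_\mu^{\bf r}(T,\mathcal{U})=\lim_{n\to\infty}\frac1n\,\mathcal{H}_\mu(\mathcal{U}_0^{n-1}\mid\mathcal{F}_{\mathcal{E}})$, where $\mathcal{H}_\mu(\mathcal{V}\mid\mathcal{F}_{\mathcal{E}})=\inf_{\beta\succeq\mathcal{V},\,\beta\in\mathcal{P}_{\mathcal{E}}}H_\mu(\beta\mid\mathcal{F}_{\mathcal{E}})$ is the conditional cover entropy; the limit exists by subadditivity and the identity is exactly the mechanism underlying Theorem \ref{local}, so I would cite or adapt it from \cite{Ma,Zhang}. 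Fixing $\delta$ and $n$, for $\mathbb{P}$-a.e. $\omega$ I would pick a minimal subfamily of $\mathcal{U}_0^{n-1}(\omega)$ of cardinality $N_{\mu_\omega}(\mathcal{U}_0^{n-1},\delta)$ covering mass $>1-\delta$, disjointify it into atoms sitting inside the chosen members, and cut the leftover set (of mass $<\delta$) along the remaining members, producing a partition $\beta(\omega)\succeq\mathcal{U}_0^{n-1}(\omega)$. Conditioning on the two-set partition into the covered part and its complement, and bounding the number of leftover pieces by $(\#\mathcal{U})^{n}$, I obtain the fiberwise estimate $H_{\mu_\omega}(\beta(\omega))\le \log N_{\mu_\omega}(\mathcal{U}_0^{n-1},\delta)+H(\delta)+\delta\,n\log\#\mathcal{U}$ with $H(\delta)=-\delta\log\delta-(1-\delta)\log(1-\delta)$. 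Integrating, invoking $\mathcal{H}_\mu(\mathcal{U}_0^{n-1}\mid\mathcal{F}_{\mathcal{E}})\le\int H_{\mu_\omega}(\beta(\omega))\,d\mathbb{P}$, dividing by $n$, letting $n\to\infty$ through Romagnoli, and finally $\delta\to0$ delivers $h_\mu^{\bf r}(T,\mathcal{U})\le\underline{h}_\mu^S(T,\mathcal{U})$.

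The step I expect to be the genuine obstacle is the measurable dependence on $\omega$ of the partition $\beta(\omega)$ in the lower bound, which is the ``more efforts for measurability'' flagged in the introduction. Since $N_{\mu_\omega}(\mathcal{U}_0^{n-1},\delta)$ is measurable by Proposition \ref{11} and only finitely many index sets compete, I would fix a canonical minimizer (say the lexicographically least one); its level sets are measurable because each $\omega\mapsto\mu_\omega(\bigcup_{s\in I}U_s(\omega))$ is measurable by \cite[Corollary 3.4]{Cru}, and the subsequent disjointification and cutting are deterministic operations on measurable fibered sets, so $\beta\in\mathcal{P}_{\mathcal{E}}$ and the identity $H_\mu(\beta\mid\mathcal{F}_{\mathcal{E}})=\int H_{\mu_\omega}(\beta(\omega))\,d\mathbb{P}$ is legitimate. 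The remaining inputs---the relative Shannon--McMillan--Breiman theorem and the relative Romagnoli identity---are classical for continuous bundle RDS, so I would invoke them rather than reprove them; the leftover-entropy bookkeeping and the justification for exchanging limits with the integral I expect to be routine once the selection is secured.
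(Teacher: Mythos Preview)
Your argument is correct, and Step 1 is essentially the paper's Step 1: both fix a partition refining $\mathcal{U}$, invoke the relative Shannon--McMillan--Breiman theorem (Lemma \ref{SMB}) to control the number of atoms needed to capture mass $1-\delta$, and pass to the infimum over refining partitions.

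Step 2, however, takes a genuinely different route. The paper follows Shapira's original scheme (via \cite{Sha,Wu}): it first invokes Claim \ref{lem} to pass to a partition $\beta\succeq\mathcal{U}_0^{n-1}$ with controlled $N_{\mu_\omega}(\beta,\rho)$, then builds a \emph{strong Rohlin tower} of height $n$ adapted to $\beta$ (Lemma \ref{cc}), manufactures from the tower a partition $\alpha\succeq\mathcal{U}$ of $\mathcal{E}$, and finally combines Birkhoff's theorem with Lemma \ref{SMB} to bound $h_\mu^{\bf r}(T,\alpha)$ by the averaged $\log N_{\mu_\omega}(\mathcal{U}_0^{n-1},\rho)$ up to error terms that vanish as $\rho\to0$. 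Your approach bypasses the Rohlin tower entirely: you go straight through the relative Romagnoli identity $h_\mu^{\bf r}(T,\mathcal{U})=\lim_n\frac{1}{n}\inf_{\beta\succeq\mathcal{U}_0^{n-1}}H_\mu(\beta\mid\mathcal{F}_{\mathcal{E}})$ and plug in a single measurably-selected $\beta$ whose fiber entropy is bounded by $\log N_{\mu_\omega}(\mathcal{U}_0^{n-1},\delta)+H(\delta)+\delta n\log\#\mathcal{U}$. This is shorter and conceptually cleaner, and---unlike the paper's Step 2---does not itself use ergodicity of $\mu$. The price is that the Romagnoli identity for bundle RDS is a nontrivial external input (it is indeed available in \cite{Zhang}), whereas the paper's Rohlin-tower argument is more self-contained once Claim \ref{lem} and Lemma \ref{cc} are in hand. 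Your measurable-selection construction of $\beta$ is essentially a variant of the paper's Claim \ref{lem}, so the measurability concern you flag is handled by the same mechanism the paper uses.
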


\begin{proof}
	{\bf Step 1}: We prove $h_{\mu}^{\bf r}(T, \mathcal{U})\geq \overline{h}_{\mu}^{S}(T, \mathcal{U})$. 
	
	Take any a finite measurable partition $\xi$ of $\mathcal{E}$ such that $\xi \succeq \mathcal{U}$.	
	According to  Lemma \ref{SMB}, there exists $F\subset \mathcal{E}$ such that $\mu(F)=1$ and for each $(\omega, x)\in F$, 
	\begin{align*}
		\lim\limits_{n\rightarrow \infty} -\dfrac{1}{n} \log \mu_{\omega}(A_{\xi, \omega}^{n}(x))=h_{\mu}^{\bf r}(T, \xi).
	\end{align*}
	Fix $\omega \in \pi_{\Omega}(F)$ and let $a>0$.   Set
	\begin{align*}
		L_{\omega, n}=\left\lbrace x\in \mathcal{E}_{\omega}: -\dfrac{1}{m}\log \mu_{\omega}(A_{\xi,\omega}^{m}(x))\leq h_{\mu}^{\bf r}(T, \xi)+a, \forall m\geq n \right\rbrace.
	\end{align*}
	By Lemma \ref{SMB},  $\mu_{\omega}(L_{\omega, n})> 1-\delta$ for $n$ sufficiently large.  Fix $n$ and  choose  a finite subset $G_{\omega, n}=\left\lbrace x_{1}, \cdots, x_{s_{\omega, n}} \right\rbrace $ of $L_{\omega, n}$ such that $L_{\omega, n}\subset \bigcup_{i=1}^{s_{\omega, n}}A_{\xi, \omega}^{n}(x_{i})$. Since the sets $A_{\xi, \omega}^{n}(x_{i})$ are distinct  and $\mu_{\omega}$ measure of each member of them is not less than $\exp(-n(h_{\mu}^{\bf r}(T, \xi)+a))$, then
	\begin{align*}
		\#G_{\omega, n}=s_{\omega, n}\leq \exp(n(h_{\mu}^{\bf r}(T, \xi)+a)).
	\end{align*}  
	Note that $\mu_{\omega} (L_{\omega, n})> 1-\delta$, we have 
	\begin{align}
		N_{\mu_{\omega}}(\mathcal{U}_{0}^{n-1}, \delta)\leq N_{\mu_{\omega}}(\xi^{n}, \delta)\leq \exp(n(h_{\mu}^{\bf r}(T, \xi)+a)).
	\end{align}
	Thus for any $a>0$
	\begin{align*}
 \limsup\limits_{n \rightarrow \infty}\frac{1}{n}\int  \log N_{\mu_{\omega}}(\mc{U}_{0}^{n-1}, \delta) d \mb{P}(\omega) \leq	h_{\mu}^{\bf r}(T, \xi)+a.
	\end{align*}
Letting $a\rightarrow 0$, we obtain
	 $$\limsup\limits_{n \rightarrow \infty}\frac{1}{n}\int  \log N_{\mu_{\omega}}(\mc{U}_{0}^{n-1}, \delta) d \mb{P}(\omega)\leq h_{\mu}^{\bf r}(T, \xi).$$ Taking infimum over $\xi\succeq \mathcal{U}$ and $\delta\rightarrow 0$, we have 
	\begin{align*}\label{key1}
	\overline{h}_{\mu}^{S}(T, \mathcal{U}) \leq 	h_{\mu}^{\bf r}(T, \mathcal{U}) .
	\end{align*}
\end{proof}
\begin{claim}\label{lem}
	For any $\mathcal{V} \in {C}_{\mathcal{E}}^{0}$ and $0<\delta<1$, there exists  $\beta \in \mathcal{P}_{\mathcal{E}}$ such that $\beta \succeq \mathcal{V}$ and $N_{\mu_{\omega}}(\beta, \delta)\leq N_{\mu_{\omega}}(\mathcal{V}, \delta)$ for $\mb{P}$-a.e. $\omega\in \Omega$.
\end{claim}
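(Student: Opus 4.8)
The plan is to build $\beta$ fiberwise and then glue the fibers together measurably. First I would pass to the disintegration: for $\mb{P}$-a.e.\ $\omega$ the $\omega$-section $\mathcal{V}(\omega)=\{V_1(\omega),\dots,V_k(\omega)\}$ is a finite Borel cover of $\mathcal{E}_\omega$, and $N_{\mu_\omega}(\beta,\delta)$ depends only on $\beta(\omega)$. So it suffices to produce, for $\mb{P}$-a.e.\ $\omega$, a finite Borel partition $\beta^\omega$ of $\mathcal{E}_\omega$ with $\beta^\omega\succeq\mathcal{V}(\omega)$ and $N_{\mu_\omega}(\beta^\omega,\delta)\le N_{\mu_\omega}(\mathcal{V}(\omega),\delta)$, and to arrange that $\omega\mapsto\beta^\omega$ is realized by a genuine element of $\mathcal{P}_{\mathcal{E}}$.

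For a fixed $\omega$, set $m=m(\omega)=N_{\mu_\omega}(\mathcal{V},\delta)$ and choose $I=I(\omega)\subseteq\{1,\dots,k\}$ with $\#I=m$ and $\mu_\omega\big(\bigcup_{i\in I}V_i(\omega)\big)>1-\delta$; write $I=\{i_1,\dots,i_m\}$. I would disjointify in this order, $W_j=V_{i_j}(\omega)\setminus\bigcup_{l<j}V_{i_l}(\omega)$, so that the $W_j$ are pairwise disjoint, $W_j\subseteq V_{i_j}(\omega)$, and $\bigcup_{j=1}^m W_j=\bigcup_{i\in I}V_i(\omega)$ has $\mu_\omega$-measure $>1-\delta$. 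On the leftover set $R=\mathcal{E}_\omega\setminus\bigcup_{i\in I}V_i(\omega)$, which is still covered by $\mathcal{V}(\omega)$, I would put $R_l=R\cap\big(V_l(\omega)\setminus\bigcup_{l'<l}V_{l'}(\omega)\big)$ for $l=1,\dots,k$, so $R_l\subseteq V_l(\omega)$. Then $\beta^\omega=\{W_1,\dots,W_m,R_1,\dots,R_k\}$ (discarding empty members) is a finite Borel partition of $\mathcal{E}_\omega$ refining $\mathcal{V}(\omega)$, and since $W_1,\dots,W_m$ already cover a set of $\mu_\omega$-measure $>1-\delta$, we get $N_{\mu_\omega}(\beta^\omega,\delta)\le m=N_{\mu_\omega}(\mathcal{V}(\omega),\delta)$.

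To assemble these into an element of $\mathcal{P}_{\mathcal{E}}$ I would first make the choice $\omega\mapsto I(\omega)$ measurable. By Proposition \ref{11} the function $\omega\mapsto N_{\mu_\omega}(\mathcal{V},\delta)$ is measurable, and by \cite[Corollary 3.4]{Cru} so is $\omega\mapsto\mu_\omega\big(\bigcup_{i\in I_0}V_i(\omega)\big)$ for each fixed $I_0\subseteq\{1,\dots,k\}$. Fixing a linear order on the (finitely many) subsets of $\{1,\dots,k\}$ and letting $\Omega_{I_0}$ be the set of $\omega$ for which $I_0$ is the $\prec$-least subset of cardinality $N_{\mu_\omega}(\mathcal{V},\delta)$ with $\mu_\omega\big(\bigcup_{i\in I_0}V_i(\omega)\big)>1-\delta$, one gets a finite measurable partition $\{\Omega_{I_0}\}$ of a full-measure subset of $\Omega$ (extend arbitrarily on the null complement). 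On each $\Omega_{I_0}$ the sets $W_j$ and $R_l$ above are explicit Boolean combinations of $V_1,\dots,V_k$, so the corresponding subsets $(\Omega_{I_0}\times X)\cap(\text{Boolean combination of the }V_i)$ of $\mathcal{E}$ are measurable; collecting all of them over all $I_0$ yields a finite measurable partition $\beta$ of $\mathcal{E}$ with $\beta(\omega)=\beta^\omega$ for a.e.\ $\omega$. Each member of $\beta$ lies inside a single $V_i$ by construction, whence $\beta\succeq\mathcal{V}$, and the fiberwise bound gives $N_{\mu_\omega}(\beta,\delta)\le N_{\mu_\omega}(\mathcal{V},\delta)$ for $\mb{P}$-a.e.\ $\omega$.

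The fiberwise construction is routine; the step I expect to be the main obstacle is the measurable selection of $I(\omega)$ together with the verification that the glued object is honestly a finite measurable partition of $\mathcal{E}$ with Borel $\omega$-sections, which is exactly the measurability issue flagged in the introduction. Note that an $\omega$-independent choice of $\beta$ (for instance the partition into the nonempty atoms of the algebra generated by $\mathcal{V}$) does \emph{not} suffice: simple examples show that such a $\beta$ can strictly increase $N_{\mu_\omega}(\,\cdot\,,\delta)$, so the dependence of the disjointification order on $\omega$ is genuinely needed.
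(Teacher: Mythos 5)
Your proposal is correct and follows essentially the same route as the paper: partition $\Omega$ (up to a null set) into finitely many measurable pieces according to which index set $I_0\subseteq\{1,\dots,k\}$ realizes $N_{\mu_\omega}(\mathcal{V},\delta)$, disjointify the corresponding $V_i$'s on each piece, sweep the leftover into a disjointification of all of $\mathcal{V}$, and glue; the paper implements the selection via representative points $\omega_1,\dots,\omega_s$ and the sets $C_i=\Omega_i\setminus\bigcup_{j<i}\Omega_j$, which is the same device as your $\prec$-least choice. If anything, your insistence that $\#I_0$ equal $N_{\mu_\omega}(\mathcal{V},\delta)$ on $\Omega_{I_0}$ is slightly more careful than the paper's definition of $\Omega_i$, which only records the measure condition.
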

\begin{proof}
	Let $\mathcal{V}=\left\lbrace V_{1}, \cdots, V_{m} \right\rbrace \in C_{\mc{E}}^{0}$. For $\mathbb{P}$-a.e. $\omega \in \Omega$, there exists $I_{\omega}\subset\left\lbrace 1, \cdots, m \right\rbrace $ with cardinality $N_{\mu_{\omega}}(\mathcal{V}, \delta)$ such that $\mu_{\omega}(\bigcup_{i\in I_{\omega}} V_{j}(\omega))\geq 1-\delta.$  Hence we can find $w_{1}, \cdots, w_{s}\in  \Omega$ such that for $\mathbb{P}$-a.e.$\omega\in \Omega$, $I_{\omega}=I_{\omega_{i}}$ for some $i\in \left\lbrace 1, \cdots, s\right\rbrace $. For $i=1, \cdots, s$, define
	\begin{align*}
		\Omega_{i}=\left\lbrace \omega\in \Omega: \mu_{\omega}(\bigcup_{j\in I_{w_{i}}}V_{j}(\omega)) \geq 1-\delta\right\rbrace. 
	\end{align*}
	Let $C_{1}=\Omega_{1}$, $C_{i}=\Omega_{i}\backslash \bigcup_{j=1}^{i-1} \Omega_{j}$, $i=2, \cdots, s$.
	Fix $i\in \left\lbrace 1,\cdots, s\right\rbrace $. Assume that $I_{\omega_{i}}=\left\lbrace k_{1}, \cdots, k_{t_{i}} \right\rbrace $, where $t_{i}=N_{\mu_{\omega_{i}}}(\mathcal{V}, \delta)$.  Take $\left\lbrace W_{1}^{\omega_{i}}, \cdots, W_{t_{i}}^{\omega_{i}} \right\rbrace $ such that 
	\begin{align*}
		W_{1}^{\omega_{i}}=V_{k_{1}}, W_{2}^{\omega_{i}}=V_{k_{2}}\setminus V_{k_{1}}, \cdots, W_{t_{i}}^{\omega_{i}}=V_{k_{t_{i}}}\setminus \cup_{j=1}^{t_{i}-1}V_{k_{j}}.
	\end{align*}
	Define $A:=\mathcal{E}\setminus \left( \cup_{i=1}^{s}(\pi_{\Omega}^{-1}C_{i}\cap \cup_{j=1}^{t_{i}} W_{j}^{\omega_{j}}) \right) $. Set $A_{1}=A\cap V_{1}$, $A_{l}:=A\cap (V_{l}\setminus \cup_{j=1}^{l-1}V_{j})$, $l=2, \cdots, m$. Finally, take
	\begin{align*}
		\beta=\left\lbrace \pi_{\Omega}^{-1}C_{1}\cap W_{1}^{\omega_{1}}, \cdots, \pi_{\Omega}^{-1}C_{1}\cap W_{t_{1}}^{\omega_{1}}, \cdots, 
		\pi_{\Omega}^{-1}C_{s}\cap W_{1}^{\omega_{s}}, \cdots, \pi_{\Omega}^{-1}C_{s}\cap W_{t_{s}}^{\omega_{s}}, A_{1}, \cdots, A_{m}\right\rbrace.
	\end{align*}
	Then $\beta \succeq \mathcal{V}$ and $N_{\mu_{\omega}}(\beta, \delta)\leq N_{\mu_{\omega}}(\mathcal{V}, \delta)$ for $\mb{P}$-a.e. $\omega$.
\end{proof}
The following lemma is the strong  Rohlin Lemma.
\begin{lem}\label{cc}{\rm \cite[Lemma 2.2]{Sha}}
	Let $(X, \mathcal{B}, \mu, T)$ be an ergodic, aperiodic system and let $\alpha\in \mathcal{P}_{X}$. Then for any $\delta>0$ and $n\in \mathbb{N}$, one can find a set $B\in \mathcal{B}$ such that $B$, $TB$, $\cdots$, $T^{n-1}B$ are mutually disjoint, $\mu\left( \bigcup_{i=0}^{n-1} T^{i} B\right)>1-\delta $ and the distribution of $\alpha$ is the same as the distribution of the partition $\alpha|_{B}$ that $\alpha$ induces on $B$.
\end{lem}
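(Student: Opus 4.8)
The plan is to deduce this strengthened Rokhlin lemma from the classical Rokhlin lemma by a Rokhlin tower construction, followed by a ``cutting'' step that fixes the $\alpha$-distribution exactly. First I would use that an ergodic aperiodic system is non-atomic, so that every measurable set can be split into subsets of arbitrary prescribed measures not exceeding its own; this is used throughout. One may also discard the atoms of $\alpha$ of measure zero and so assume $\mu(A)>0$ for all $A\in\alpha$. Fix $\delta>0$ and $n$; let $\eta>0$ and a multiple $N=nq$ of $n$ be parameters to be fixed at the end (in terms of $\delta$, $n$, and $\min_{A\in\alpha}\mu(A)$). Applying the classical Rokhlin lemma gives $C\in\mathcal B$ with $C,TC,\dots,T^{N-1}C$ pairwise disjoint and $\mu(R)>1-\eta$, where $R=\bigsqcup_{j=0}^{N-1}T^jC$.

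Next I would refine the base by $\alpha$-names of length $N$: write $C=\bigsqcup_w C_w$ with $x\in C_w$ iff $T^jx\in A_{w_j}$ for all $0\le j<N$, split each $C_w$ into $n$ pieces of equal measure, and let $C_r$ ($0\le r<n$) be the union of the $r$-th pieces, so that $\mu(C_r\cap C_w)=\mu(C_w)/n$ for every column. The candidate base is
\[
  B_0=\bigsqcup_{r=0}^{n-1}\ \bigsqcup_{\ell\,:\,r+\ell n\le N-n}\ T^{\,r+\ell n}C_r .
\]
Since distinct $C_r$, $C_{r'}$ are disjoint and, inside a fixed residue class modulo $n$, the exponents $r+\ell n+i$ with $0\le i<n$ are pairwise distinct, one checks that $B_0,TB_0,\dots,T^{n-1}B_0$ are pairwise disjoint and that $\bigcup_{i<n}T^iB_0$ covers all of $R$ except at most its top $n-1$ levels; hence $\mu\!\bigl(\bigcup_{i<n}T^iB_0\bigr)>1-\eta-O(n/N)$. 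The point of the $\alpha$-name refinement is that each $C_r$ meets every column in proportion $1/n$, so $\mu\bigl(A_a\cap T^{r+\ell n}C_r\bigr)=\tfrac1n\,\mu\bigl(A_a\cap T^{r+\ell n}C\bigr)$; summing over $r$ and $\ell$ the residue classes reassemble almost all levels, which gives $\mu(A_a\cap B_0)=\tfrac1n\mu(A_a\cap R)$ and $\mu(B_0)=\tfrac1n\mu(R)$ up to $O(1/N)$ corrections. Consequently the $\alpha$-distribution of $B_0$ is $\bigl(\mu(A_a\cap R)/\mu(R)\bigr)_a$ up to $O(1/N)$, and since $\mu(R^c)<\eta$ and $\mu(A_a\cap R)\ge\mu(A_a)-\eta$ this lies within a prescribed factor $1+\epsilon$ of $\bigl(\mu(A_a)\bigr)_a$ once $\eta$ is small relative to $\epsilon\,\min_{A\in\alpha}\mu(A)$ and $N$ is large.

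Finally I would correct the distribution exactly. Put $c=\min_a\mu(A_a\cap B_0)/\mu(A_a)$; averaging with weights $\mu(A_a)$ shows $c\le\mu(B_0)$, while the estimate above gives $c\ge(1-\epsilon)\mu(B_0)$. For each $a$ choose, using non-atomicity, a subset $B_a\subseteq A_a\cap B_0$ with $\mu(B_a)=c\,\mu(A_a)$ (possible since $c\,\mu(A_a)\le\mu(A_a\cap B_0)$ by definition of $c$), and set $B=\bigsqcup_a B_a\subseteq B_0$. Then $B,TB,\dots,T^{n-1}B$ are disjoint because $B\subseteq B_0$; moreover $\mu(A_a\cap B)=\mu(B_a)=c\,\mu(A_a)$ and $\mu(B)=\sum_a c\,\mu(A_a)=c$, so $\mu(A_a\cap B)=\mu(A_a)\mu(B)$, i.e.\ $\alpha|_B$ and $\alpha$ have the same distribution; and $\mu\!\bigl(\bigcup_{i<n}T^iB\bigr)=n\mu(B)=nc\ge(1-\epsilon)\,n\mu(B_0)=(1-\epsilon)\,\mu\!\bigl(\bigcup_{i<n}T^iB_0\bigr)>(1-\epsilon)\bigl(1-\eta-O(n/N)\bigr)$, which exceeds $1-\delta$ once $\epsilon$, $\eta$ are small relative to $\delta$ and $N$ is large; fixing the parameters in that order completes the plan. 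The step I expect to be the main obstacle is the middle one: verifying the disjointness of $B_0,TB_0,\dots,T^{n-1}B_0$ (this is precisely where the residue-class bookkeeping is needed) and estimating the top-of-tower edge effects together with the loss $\mu(R^c)$ so that both the covering measure and the $\alpha$-distribution of $B_0$ land within the prescribed tolerances; once $B_0$ has a distribution close to that of $\alpha$, the final cut is routine.
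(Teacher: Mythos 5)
The paper offers no proof of this lemma: it is quoted verbatim from Shapira \cite[Lemma 2.2]{Sha}, so there is no in-paper argument to compare yours against. Your proposal is a correct, self-contained derivation along the standard lines for the strong Rokhlin lemma, and the steps that matter all check out. Ergodicity plus aperiodicity does force $\mu$ to be non-atomic (an atom would generate a finite invariant orbit of positive measure), which justifies both the equal-measure splitting of the columns $C_w$ and the final exact cut. The residue-class bookkeeping does give disjointness of $B_0, TB_0,\dots,T^{n-1}B_0$: two translates $T^{r+\ell n+i}C_r$ and $T^{r'+\ell' n+i'}C_{r'}$ with exponents in $[0,N-1]$ either lie in different tower levels, or have equal exponents, in which case either $r\neq r'$ (and $C_r\cap C_{r'}=\emptyset$) or $r=r'$, forcing $i\equiv i'\ (\mathrm{mod}\ n)$ and hence $i=i'$. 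The identity $\mu(A_a\cap T^jC_r)=\frac{1}{n}\mu(A_a\cap T^jC)$ holds because each $T^jC_w$ lies entirely inside a single atom of $\alpha$ and $\mu(C_r\cap C_w)=\mu(C_w)/n$, with $T$ measure-preserving and invertible. Two small points: the union $\bigcup_{i<n}T^iB_0$ misses the bottom levels $T^jC_r$ with $j<r$ as well as the top ones, but the total loss is still at most $2n\mu(C)\le 2n/N$, so your $O(n/N)$ bound survives; and in the last step the inequality $c\,\mu(A_a)\le\mu(A_a\cap B_0)$ needed to choose $B_a$ is exactly the definition of $c$ as a minimum, so $\mu(A_a\cap B)=\mu(A_a)\mu(B)$ is indeed immediate and gives the exact distribution statement. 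With the parameters fixed in the order you indicate ($\epsilon,\eta$ small against $\delta$ and $\min_a\mu(A_a)$, then $N$ a large multiple of $n$), the argument is complete.
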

\noindent{\bf Step 2:} We prove 
$$h_{\mu}^{\bf r}(T, \mathcal{U})\leq \underline{h}_{\mu}^{S}(T, \mathcal{U})$$ for ergodic measure $\mu$ and $\mathcal{U}\in C_\mathcal{E}^{0}$.
\begin{proof}
	Fix $n\in \mathbb{N}$. By Claim \ref{lem}, we can find $\beta \in \mathcal{P}_{\mathcal{E}}$ such that $\beta \succeq \mathcal{U}_{0}^{n-1}$ and there exists a subset $A$ of $\mathcal{E}$   such that  $\mu(A)<\rho$ and for any $(\omega,x)\notin A$, we have $N_{\mu_{\omega}}(\beta, \rho)\leq N_{\mu_{\omega}}(\mathcal{U}_{0}^{n-1}, \rho)$. Pick $\delta>0$ such that $0<\rho +\delta<1/4$. By  Lemma \ref{cc}, we can construct a strong Rohlin tower with respect to $\beta$, with height  $n$ and error $<\delta$. Let $\tilde{B}$ denote the  base of  tower and $B=\tilde{B}\setminus A$. Clearly, $\mu(B)>(1-\rho)\mu(\tilde{B})$ and $\mu(E)\geq 1- (\rho+\delta)$, where $E=\cup_{i=0}^{n-1}\Theta^{i}B$. Consider $\beta|_{\tilde{B}}$ and index its elements by sequences $i_{0}, \cdots, i_{n-1}$ such that if $B_{i_{0}, \cdots, i_{n-1}}\in \beta|_{\tilde{B}},$ then $\Theta^{j} B_{i_{0}, \cdots, i_{n-1}}\subset U_{i_{j}}$ for every $0\leq j \leq n-1$. Let $\hat{\alpha}=\left\lbrace \hat{A}_{1}, \cdots, \hat{A}_{M}\right\rbrace $ be a partition of $E$ defined by 
	\begin{align*}
		\hat{A}_{m}:=\bigcup\limits \left\lbrace \Theta^{j} B_{i_{0}, \cdots, i_{n-1}}: 0\leq j\leq n-1, i_{j}=m \right\rbrace. 
	\end{align*}
	Note that  $\hat{A}_{m}\subset U_{m}$ for every  $1\leq m \leq M$. Extend $\hat{\alpha}$ to a partition $\alpha$ of $\mathcal{E}$ in some way such that $\alpha\succeq \mathcal{U}$ and $\#\alpha=2M$. 
	Set $\eta^{4}=\rho+\delta$ and define for every $k>n$ large enough, $f_{k}(\omega, x)=\frac{1}{k}\sum_{i=0}^{k-1}1_{E}(\Theta^{i}(\omega, x))$ and $L_{k}:=\left\lbrace (\omega, x)\in \mathcal{E}: f_{k}(\omega, x)>1-\eta^{2} \right\rbrace $. Then by Birkhoff ergodic theorem $\int f_{k} d \mu >1-\eta^{4}$ and 
	\begin{align*}
		\eta^{2}\mu(L_{k}^{c})\leq \int_{L_{k}^{c}} 1-f_{k} d \mu \leq \int_{\mathcal{E}} 1-f_{k} d \mu \leq \eta^{4}.
	\end{align*}
	Then $\mu(L_{k})\geq  1-\eta^{2}$.
	Take
	\begin{align*}
		&J_{k}=\left\lbrace(\omega, x)\in \mathcal{E}: \mu_{\omega}(A_{\alpha,\omega}^{j}(x))<\exp(-(h_{\mu}^{\bf r}(T, \alpha)-\eta)j), \forall j\geq k \right\rbrace \bigcap \\&\left\lbrace (\omega,x)\in \mc{E}: \bigg|\frac{1}{j}\sum_{i=0}^{j-1}\log N_{\mu_{\theta^{i}\omega}}(\mathcal{U}_{0}^{n-1}, \rho)1_{B}(\Theta^{i}(\omega, x))-\int_{B}
		\log N_{\mu_{\omega}}(\mathcal{U}_{0}^{n-1}, \rho) d \mu\bigg |\leq \eta, \forall j\geq k\right\rbrace. 
	\end{align*} 
	By Theorem  \ref{SMB} and Birkhoff ergodic theorem, $\mu(J_{k})>1-\eta^{2}$  for $k$ large enough. Set $G_{k}=L_{k}\cap J_{k}$ and then  $\mu(G_{k})>1-2\eta^{2}$. Define
	\begin{align*}
		\tilde{G}_{k}^{c}=\left\lbrace(\omega, x)\in G_{k}: \mu_{\omega}(G_{k})<1-4\eta \right\rbrace\cup G_{k}^{c}=\left\lbrace (\omega, x)\in G_{k}: \mu_{\omega}(G_{k}^{c})>4\eta \right\rbrace \cup G_{k}^{c}. 
	\end{align*}
	Therefore,
	\begin{equation*}
		\mu(\tilde{G}_{k}^{c})\cdot 4 \eta \leq  \int \mu_{\omega} (G_{k}^{c}) d \mu+\mu(G_{k}^{c})=2  \mu(G_{k}^{c})\leq 4 \eta^{2},
	\end{equation*}
i.e., $	\mu(\tilde{G}_{k}^{c})\leq \eta$.
	Given $(\omega, x)\in \mathcal{E}$, we fix an element $C$ of this partition of $G_{k}\cap \pi_{\Omega}^{-1} \pi_{\Omega} (\omega, x)$ and want to estimate the number of $\alpha_{0}^{n-1}$-elements of $C$ visit $B$, then we need at most $N_{\mu_{\theta^{i_{j}}\omega}}(\mathcal{U}_{0}^{n-1}, \rho)$ $\alpha_{i_{j}}^{i_{j}+n-1}$-elements to cover $C$ for each $(\omega ,x)\in G_{k}$. Because the size of $[0, k-1]\setminus \cup_{j}[i_{j}, i_{j}+n-1]$ is at most $\eta^{2}k+2n,$ we need at most $\Pi_{j=1}^{m} N_{\mu_{\theta^{j}\omega}}(\mathcal{U}_{0}^{n-1}, \rho)\cdot (2M)^{\eta^{2}k+2n}$ $\alpha^{k-1}$-elements to cover $C$. Since $(\omega, x) \in G_{k}$, we have
	$G_{k}\cap \pi_{\Omega}^{-1}\omega$ can be covered by no more than 
	\begin{align*}
		e^{kH(\eta^{2}+2n/k)}\cdot (2M)^{\eta^{2}+2n/k}\cdot e^{k(\int_{B} \log N_{\mu_{\omega}}(\mathcal{U}_{0}^{n-1}, \rho)d \mu+\eta)}
	\end{align*}
	$\alpha_{k-1}$ elements. Note that $(\omega, x)\in G_{k}$, then we have 
	\begin{align}\label{aa}
		&1-4\eta\leq \mu_{\omega}(G_{k}\cap \pi_{\Omega}^{-1}\omega)\leq \\& \exp(-(h_{\mu}^{\bf r}(T, \alpha)-\eta)k)\cdot\exp(kH(\eta^{2}+2n/k))\cdot \nonumber\\&(2M)^{\eta^{2}+2n/k}\cdot \exp\left( k(\int_{B} \log N_{\mu_{\omega}}(\mathcal{U}_{0}^{n-1}, \rho)d \mu+\eta)\right).\nonumber
	\end{align}
	By Claim \ref{lem}, we can get $(\omega, x)\rightarrow N_{\mu_{\omega}}(\mathcal{U}_{0}^{n-1}, \rho)$ is constant on each atom of $\beta|_{\mathcal{E}}$. Note that the distribution of $\beta$ is the same as the distribution of   partition $\beta|_{\tilde{B}}$. Combining with (\ref{aa}), we have 
	\begin{align*}
		h_{\mu}^{\bf r}(T, \alpha) &\leq \eta +H(\eta^{2})+\eta^{2}\log (2M)+ \int_{B} \log N_{\mu_{\omega}}(\mathcal{U}_{0}^{n-1}, \rho) d \mu+\eta \\&\leq 2\eta+ H(\eta^{2})+\eta^{2}\log (2M)+\frac{1}{n}\int_{B}\log N_{\mu_{\omega}}(\mathcal{U}_{0}^{n-1}, \rho) d \mu\\& \leq 2\eta+ H(\eta^{2})+\eta^{2}\log (2M)+\frac{1}{n}\int_{\pi_{\Omega}(B)}\log N_{\mu_{\omega}}(\mathcal{U}_{0}^{n-1}, \rho) d \mathbb{P}(\omega).
	\end{align*} 
	Letting $n\rightarrow \infty$ and then $\rho \rightarrow 0$, we have
	\begin{align*}
		h_{\mu}^{\bf r}(T, \mathcal{U})\leq h_{\mu}^{\bf r}(T, \alpha) \leq \underline{h}_{\mu}^{S}(T, \mathcal{U}).
	\end{align*}
\end{proof}

\begin{thm}\label{main1}
Let $T$ be a continuous bundle RDS  over an ergodic measure-preserving system  $(\Omega, \mc{F},\mb{P}, \theta)$. Then
\begin{align*}
&\mb{E}\umdim(T,X,d)=\limsup\limits_{\epsilon \rightarrow 0}\frac{1}{|\log \epsilon|}\sup_{\mu\in {E}_{\mathbb{P}}(T)}\inf_{\substack{{\rm diam}(\mc{U})\leq \epsilon ,\\ \mc{U}\in \mc{C}_{X}^o }}h_{\mu}^{S}(T, (\Omega\times\mc{U})_{\mathcal{E}}).\\&
\mb{E}\lmdim(T,X, d)=\liminf\limits_{\epsilon \rightarrow 0} \frac{1}{|\log \epsilon|}\sup_{\mu\in {E}_{\mathbb{P}}(T)}\inf_{\substack{{\rm diam}(\mc{U})\leq \epsilon,\\ \mc{U}\in \mc{C}_{X}^o }}h_{\mu}^{S}(T, (\Omega\times\mc{U})_{\mathcal{E}}).
\end{align*}
\end{thm}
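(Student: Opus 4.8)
The plan is to deduce Theorem~\ref{main1} from Theorem~\ref{main} by sandwiching the Shapira-type quantity between two Kolmogorov--Sinai quantities using Theorem~\ref{local variational principle}, together with the same Lebesgue-number/diameter comparison that powered the proof of Theorem~\ref{main}. First I would fix $\epsilon>0$ and, via Lemma~\ref{key}, produce a finite open cover $\mc{U}$ of $X$ with ${\rm diam}(\mc{U})\le\epsilon$ and ${\rm Leb}(\mc{U})\ge\epsilon/4$. For any ergodic $\mu\in E_{\mb{P}}(T)$, Theorem~\ref{local variational principle} gives $h_\mu^{S}(T,(\Omega\times\mc{U})_{\mc{E}})=h_\mu^{\bf r}(T,(\Omega\times\mc{U})_{\mc{E}})$, and since every finite Borel partition $\alpha$ of $X$ refining $\mc{U}$ has ${\rm diam}(\alpha)\le\epsilon$, we get
\begin{align*}
\inf_{\substack{{\rm diam}(\mc{U}')\le\epsilon,\\ \mc{U}'\in\mc{C}_X^o}}h_\mu^{S}(T,(\Omega\times\mc{U}')_{\mc{E}})
\le h_\mu^{S}(T,(\Omega\times\mc{U})_{\mc{E}})
= h_\mu^{\bf r}(T,(\Omega\times\mc{U})_{\mc{E}})
= \inf_{\alpha\succeq\mc{U},\,\alpha\in\mc{P}_X}h_\mu^{\bf r}(T,(\Omega\times\alpha)_{\mc{E}}),
\end{align*}
where the last equality is the definition of measure-theoretic entropy of an open cover restricted to partitions of product form. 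Taking $\sup_{\mu\in E_{\mb{P}}(T)}$ and invoking Theorem~\ref{local}, the right-hand side equals $h_{top}^{\bf r}(T,(\Omega\times\mc{U})_{\mc{E}})$, which by Lemma~\ref{ss} is at most $\int S(\omega,\epsilon/4)\,d\mb{P}(\omega)$. Dividing by $|\log\epsilon|$ and letting $\epsilon\to0$ (along the appropriate $\limsup$ or $\liminf$) yields one inequality.

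For the reverse inequality I would run the symmetric argument: for a finite Borel partition $\alpha$ of $X$ with ${\rm diam}(\alpha)\le\epsilon/8$ one has $\alpha\succeq\mc{U}$, hence any open cover $\mc{V}$ of $X$ refined by such an $\alpha$ that still has ${\rm diam}(\mc{V})\le\epsilon$ can be used; more directly, since $h_\mu^{S}(T,(\Omega\times\mc{U})_{\mc{E}})=h_\mu^{\bf r}(T,(\Omega\times\mc{U})_{\mc{E}})\ge\int S(\omega,{\rm diam}(\mc{U}))\,d\mb{P}(\omega)\ge\int S(\omega,\epsilon)\,d\mb{P}(\omega)$ after maximizing over $\mu$ and using Theorem~\ref{local} and Lemma~\ref{ss}, and since ${\rm diam}(\mc{U})\le\epsilon$ shows $\mc{U}$ is admissible in the infimum at scale $\epsilon$, we obtain
\begin{align*}
\sup_{\mu\in E_{\mb{P}}(T)}\inf_{\substack{{\rm diam}(\mc{V})\le\epsilon,\\ \mc{V}\in\mc{C}_X^o}}h_\mu^{S}(T,(\Omega\times\mc{V})_{\mc{E}})
\ge \int S(\omega,\epsilon)\,d\mb{P}(\omega).
\end{align*}
Dividing by $|\log\epsilon|$ and passing to the limit gives the matching bound, so both the $\limsup$ and $\liminf$ versions follow exactly as in the proof of Theorem~\ref{main}.

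The one genuine subtlety — and the step I expect to cost the most care — is the interchange of $\sup_\mu$ and $\inf_{\mc{U}}$ needed to make the sandwiching rigorous: Theorem~\ref{local variational principle} is stated for a \emph{fixed} cover $\mc{U}$, so it applies pointwise in $\mu$ but I must be sure that the cover $\mc{U}$ produced by Lemma~\ref{key} does not depend on $\mu$ (it does not, which is the point) and that the infimum over covers of bounded diameter is genuinely dominated by the single cover $\mc{U}$ for the upper bound and genuinely dominates $S(\omega,\epsilon)$ via $\mc{U}$ for the lower bound. A second, more technical point is that Theorem~\ref{local variational principle} requires $\mc{U}\in C_{\mc{E}}^0$ while Theorem~\ref{local} requires $\mc{U}\in C_{\mc{E}}^{0'}$; since the covers of the form $(\Omega\times\sigma)_{\mc{E}}$ coming from finite open covers $\sigma$ of $X$ lie in $C_{\mc{E}}^{0'}\subset C_{\mc{E}}^0$, both theorems apply to them, and the remark after the definition of $h_\mu^{\bf r}(T,\mc{U})$ (that the infimum may be taken over partitions of product form) is exactly what lets the chain of equalities above close up. Apart from these bookkeeping checks, no new estimates are needed; the proof is a direct transcription of the proof of Theorem~\ref{main} with $h_\mu^{\bf r}(T,(\Omega\times\mc{U})_{\mc{E}})$ replaced by $h_\mu^{S}(T,(\Omega\times\mc{U})_{\mc{E}})$ and Theorem~\ref{local variational principle} inserted to justify the replacement.
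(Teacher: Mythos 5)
Your overall strategy is the same as the paper's: use Theorem~\ref{local variational principle} to replace $h_\mu^{S}$ by $h_\mu^{\bf r}$ cover by cover, and then reduce everything to Theorem~\ref{main}. Your upper bound is correct. The problem is the reverse inequality. You argue that since ${\rm diam}(\mc{U})\le\epsilon$, the cover $\mc{U}$ produced by Lemma~\ref{key} is ``admissible in the infimum at scale $\epsilon$,'' and from $\sup_\mu h_\mu^{\bf r}(T,(\Omega\times\mc{U})_{\mc{E}})=h_{top}^{\bf r}(T,(\Omega\times\mc{U})_{\mc{E}})\ge\int S(\omega,{\rm diam}(\mc{U}))\,d\mb{P}(\omega)$ you conclude $\sup_\mu\inf_{\mc{V}}h_\mu^{S}(T,(\Omega\times\mc{V})_{\mc{E}})\ge\int S(\omega,\epsilon)\,d\mb{P}(\omega)$. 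This does not follow: admissibility of $\mc{U}$ only yields $\inf_{\mc{V}}h_\mu^{S}(T,(\Omega\times\mc{V})_{\mc{E}})\le h_\mu^{S}(T,(\Omega\times\mc{U})_{\mc{E}})$, i.e.\ an \emph{upper} bound on the infimum, so the two inequalities you combine point in opposite directions. Exhibiting a single admissible cover can never bound an infimum from below, and the alternative of pushing $\sup_\mu$ inside the infimum fails because $\sup\inf\le\inf\sup$. So the lower bound, as written, does not go through.

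The repair is exactly what the paper does and uses no distinguished cover at all: for every $\mu\in E_{\mb{P}}(T)$ and \emph{every} open cover $\mc{V}$ of $X$ with ${\rm diam}(\mc{V})\le\epsilon$, Theorem~\ref{local variational principle} and the definition of $h_\mu^{\bf r}(T,\cdot)$ give
\begin{align*}
h_\mu^{S}(T,(\Omega\times\mc{V})_{\mc{E}})=h_\mu^{\bf r}(T,(\Omega\times\mc{V})_{\mc{E}})=\inf_{\alpha\succeq\mc{V},\ \alpha\in\mc{P}_X}h_\mu^{\bf r}(T,(\Omega\times\alpha)_{\mc{E}})\ \ge\ \inf_{{\rm diam}(\alpha)\le\epsilon,\ \alpha\in\mc{P}_X}h_\mu^{\bf r}(T,(\Omega\times\alpha)_{\mc{E}}),
\end{align*}
since every partition refining $\mc{V}$ inherits the bound ${\rm diam}(\alpha)\le\epsilon$. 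Taking the infimum over all such $\mc{V}$, then the supremum over $\mu$, and invoking Theorem~\ref{main} supplies the missing inequality; Lemma~\ref{key} is needed only on the upper-bound side. With this replacement your argument coincides with the paper's proof.
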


\begin{proof}
Fix  $\epsilon>0$ and $\mu
\in E_{\mathbb{P}}(T)$. Then 
\begin{align}\label{inequ 3.8}
\inf_{\substack{{\rm diam}(\mc{U})\leq \epsilon,\\ \mc{U}\in {C}_{X}^o }} h_{\mu}^{S}(T, (\Omega\times\mc{U})_{\mathcal{E}})
&=\inf_{{\rm diam}(\mc{U})\leq \epsilon, \atop  \mc{U}\in {C}_{X}^o  }h_{\mu}^{\bf r}(T, (\Omega\times\mc{U})_{\mathcal{E}}), \text{by Theorem}~\ref{local variational principle} \nonumber\\
&=\inf_{{\rm diam}(\mc{U})\leq \epsilon, \alpha \succeq \mc{U}}h_{\mu}^{\bf r}(T, (\Omega\times\alpha)_{\mathcal{E}})\nonumber\\
&\geq  \inf_{{\rm diam}(\alpha)\leq \epsilon, \atop \alpha\in {P}_{X}}h_{\mu}^{\bf r}(T,(\Omega\times\alpha)_{\mathcal{E}}).
\end{align}

By Lemma \ref{key}, we can choose a finite open cover $\mathcal{U}'$ of $X$ with ${\rm diam}(\mathcal{U}')\leq \epsilon$ and ${\rm Leb}(\mathcal{U}')\geq \frac{\epsilon}{4}$. Then 
\begin{align}\label{inequ 3.9}
\inf_{\substack{{\rm diam}(\mc{U})\leq \epsilon\\ \mc{U}\in {C}_{X}^o }} h_{\mu}^{S}(T, (\Omega\times\mc{U})_{\mathcal{E}})
&\leq  h_{\mu}^{S}(T, (\Omega\times\mc{U'})_{\mathcal{E}})\nonumber\\
&=h_{\mu}^{\bf r}(T, (\Omega\times\mc{U'})_{\mathcal{E}})=\inf_{\alpha \succeq \mc{U}^{'},\mc{\alpha}\in {P}_{X}}h_{\mu}^{\bf r}(T, (\Omega\times\alpha)_{\mathcal{E}}), \text{by Theorem}~ \ref{local variational principle}\nonumber\\
&\leq  \inf_{{\rm diam}(\alpha)\leq \frac{\epsilon}{8}, \atop \mc{\alpha}\in {P}_{X}}h_{\mu}^{\bf r}(T,(\Omega\times\alpha)_{\mathcal{E}}).
\end{align}
We finally get the desired results by the inequalities (\ref{inequ 3.8}), (\ref{inequ 3.9}) and Theorem \ref{main}.		
\end{proof}
\subsection{Variational principle III:  Katok's $\epsilon$-entropy}\label{sec4}
In this subsection,   replacing  Shapira's $\epsilon$-entropy with Katok local  $\epsilon$-entropy  we prove the three main result  Theorem \ref{main2} by virtue of  Theorem \ref{main1}.

Given $\mu\in {M}_{\mathbb{P}}(T)$, let
\begin{align*}
N_{\mu_{\omega}}^{\delta}(n, \epsilon)=\min\left\lbrace \# j: \mu_{\omega}\left( \bigcup_{i=1}^{j}B_{d_{n}^{\omega}}(x_{i}, \epsilon)\right) > 1-\delta \right\rbrace. 
\end{align*}
\begin{prop} \label{prop 3.2}
Let $\mu\in {M}_{\mathbb{P}}(T)$, $\epsilon>0$ and $0<\delta <1$. Then for every $n\geq 1$, the map $\omega \mapsto N_{\mu_{\omega}}^{\delta}(n, \epsilon)$ is measurable.
\end{prop}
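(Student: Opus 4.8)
The plan is to follow the template of Proposition \ref{11}, the only new feature being that the Bowen balls are indexed by the uncountable set of centres $x\in\mathcal{E}_\omega$ rather than by a fixed finite cover, so one extra approximation step is needed. Since $\mathcal{E}_\omega$ is compact in the metric $d_n^\omega$, finitely many $(\omega,\epsilon,n)$-Bowen balls cover it, so $N_{\mu_\omega}^\delta(n,\epsilon)$ is a well-defined positive integer; hence it suffices to prove that $\Omega_{\le q}:=\{\omega:N_{\mu_\omega}^\delta(n,\epsilon)\le q\}$ is measurable for every $q\in\mathbb{N}$.

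First I would record two facts. (i) For each fixed $\omega$, the map $(x_1,\dots,x_q)\mapsto \mu_\omega\big(\bigcup_{l=1}^q B_{d_n^\omega}(x_l,\epsilon)\big)$ is lower semicontinuous on $\mathcal{E}_\omega^q$: continuity of each $T_\omega^i$ on $\mathcal{E}_\omega$ makes $\bigcup_l B_{d_n^\omega}(x_l,\epsilon)$ open, and a compactness argument shows that any compact subset of it is contained in $\bigcup_l B_{d_n^\omega}(x_l',\epsilon)$ once $(x_1',\dots,x_q')$ is close enough to $(x_1,\dots,x_q)$; inner regularity of $\mu_\omega$ then gives the claim. (ii) Because $\mathcal{E}\subset\Omega\times X$ is measurable with compact fibres, $X$ is Polish and $(\Omega,\mathcal{F},\mathbb{P})$ is a Lebesgue space, the multifunction $\omega\mapsto\mathcal{E}_\omega$ admits a Castaing representation: there are measurable maps $\sigma_k:\Omega\to X$, $k\ge 1$, with $\sigma_k(\omega)\in\mathcal{E}_\omega$ and $\overline{\{\sigma_k(\omega):k\ge 1\}}=\mathcal{E}_\omega$ for every $\omega$.

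Combining (i) and (ii): if some $(x_1,\dots,x_q)\in\mathcal{E}_\omega^q$ satisfies $\mu_\omega\big(\bigcup_l B_{d_n^\omega}(x_l,\epsilon)\big)>1-\delta$, lower semicontinuity makes the inequality persist on a neighbourhood of $(x_1,\dots,x_q)$, and since $\{(\sigma_{k_1}(\omega),\dots,\sigma_{k_q}(\omega)):k_1,\dots,k_q\ge 1\}$ is dense in $\mathcal{E}_\omega^q$, the centres may be taken among the $\sigma_k(\omega)$. Therefore
\[
\Omega_{\le q}=\bigcup_{k_1,\dots,k_q\ge 1}\Big\{\omega:\mu_\omega\Big(\bigcup_{l=1}^q B_{d_n^\omega}(\sigma_{k_l}(\omega),\epsilon)\Big)>1-\delta\Big\},
\]
a countable union. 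For each fixed tuple $(k_1,\dots,k_q)$ the set $\big\{(\omega,y)\in\mathcal{E}:\min_{1\le l\le q}\max_{0\le i<n}d(T_\omega^i\sigma_{k_l}(\omega),T_\omega^i y)<\epsilon\big\}$ is measurable in $\Omega\times X$, since $(\omega,y)\mapsto T_\omega^i y$ and $\omega\mapsto\sigma_{k_l}(\omega)$ are measurable and $d$ is continuous, and its $\omega$-section is exactly $\bigcup_l B_{d_n^\omega}(\sigma_{k_l}(\omega),\epsilon)$. Then \cite[Corollary 3.4]{Cru} yields the measurability of $\omega\mapsto\mu_\omega\big(\bigcup_l B_{d_n^\omega}(\sigma_{k_l}(\omega),\epsilon)\big)$, so each member of the union, hence $\Omega_{\le q}$, lies in $\mathcal{F}$. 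As $N_{\mu_\omega}^\delta(n,\epsilon)$ takes only finitely many values, this proves the measurability of $\omega\mapsto N_{\mu_\omega}^\delta(n,\epsilon)$.

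The main obstacle is the first step: passing from an uncountable family of centres $x\in\mathcal{E}_\omega$ to a countable measurable family, which is precisely where the lower semicontinuity of the covered-measure functional and the existence of a measurable dense selection are needed; the remainder is bookkeeping parallel to Proposition \ref{11}. Alternatively, one could argue by the measurable projection theorem — showing that $\{(\omega,x_1,\dots,x_q):x_l\in\mathcal{E}_\omega,\ \mu_\omega(\bigcup_l B_{d_n^\omega}(x_l,\epsilon))>1-\delta\}$ is measurable in $\Omega\times X^q$ and projecting onto $\Omega$, using completeness of $(\Omega,\mathcal{F},\mathbb{P})$ and that $X^q$ is Polish — but the Castaing-representation route stays closer to the techniques already used in the paper.
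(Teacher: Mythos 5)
Your argument is correct, and it is in fact considerably more complete than what the paper provides: the paper's entire proof is the one-line assertion that measurability of $N_{\mu_{\omega}}^{\delta}(n,\epsilon)$ ``is derived from Proposition \ref{11}''. That assertion glosses over precisely the difficulty you isolate — Proposition \ref{11} treats a \emph{fixed finite} cover of $\mathcal{E}$, whereas here the Bowen balls range over an uncountable family of centres $x\in\mathcal{E}_\omega$ that varies with $\omega$, so one cannot simply run the same finite case analysis over subsets of a fixed index set. Your two added ingredients — lower semicontinuity of $(x_1,\dots,x_q)\mapsto\mu_\omega\bigl(\bigcup_l B_{d_n^\omega}(x_l,\epsilon)\bigr)$ via inner regularity and equicontinuity of the finitely many maps $T_\omega^i$, and a Castaing representation of $\omega\mapsto\mathcal{E}_\omega$ (available since $\mathcal{F}$ is complete, $X$ is Polish and $\mathcal{E}$ has compact fibres; the paper even cites Castaing--Valadier) — are exactly what is needed to replace the uncountable supremum by a countable union of sets whose measurability then follows from \cite[Corollary 3.4]{Cru} as in Proposition \ref{11}. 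The strictness of the inequality $>1-\delta$ is what lets lower semicontinuity transfer the covering property to nearby centres, and you use it correctly. So your route is not merely a valid alternative; it supplies the argument the paper omits, at the cost of invoking a measurable-selection theorem that the paper's sketch never makes explicit.
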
 
\begin{proof}
Measurability of  $N_{\mu_{\omega}}^{\delta}(n, \epsilon)$ is derived from  Proposition \ref{11}.
\end{proof}
Based on the Proposition \ref{prop 3.2}, we  define the \emph{upper and lower Katok's $\epsilon$-entropies of $\mu$} as follows
\begin{align*}
	&\overline{h}_{\mu}^{K}(T, \epsilon)=\lim\limits_{\delta \to0}\limsup\limits_{n\rightarrow \infty}\frac{1}{n}\int \log N_{\mu_{\omega}}^{\delta}(n, \epsilon) d \mathbb{P}(\omega),\\
	&
	\underline{h}_{\mu}^{K}(T, \epsilon)=\lim\limits_{\delta \to0}\liminf\limits_{n\rightarrow \infty}\frac{1}{n}\int \log N_{\mu_{\omega}}^{\delta}(n, \epsilon) d \mathbb{P}(\omega).
\end{align*}

\begin{thm}\label{main2}
Let $T$ be a continuous bundle RDS  over an ergodic measure-preserving  system $(\Omega, \mc{F},\mb{P}, \theta)$. Then	
\begin{align*}
&\mb{E}\umdim(T,X,d)=\limsup\limits_{\epsilon \rightarrow 0}\frac{1}{|\log \epsilon|}\sup_{\mu\in {E}_{\mathbb{P}}(T)}\overline{h}_{\mu}^{K}(T, \epsilon).\\
&
\mb{E}\lmdim(T, X, d)=\liminf\limits_{\epsilon\rightarrow 0} \frac{1}{|\log \epsilon|}\sup_{\mu\in {E}_{\mathbb{P}}(T)}\overline{h}_{\mu}^{K}(T, \epsilon).
\end{align*}
The results are valid if we change $\overline{h}_{\mu}^{K}(T, \epsilon)$ into $\underline{h}_{\mu}^{K}(T, \epsilon)$.
\end{thm}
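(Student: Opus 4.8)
The plan is to deduce Theorem \ref{main2} from Theorem \ref{main1} by squeezing $\overline{h}_\mu^K(T,\epsilon)$ between Shapira $\epsilon$-entropies of finite open covers of $X$ at comparable scales. The geometric mechanism is the usual comparison between Bowen balls and iterated refinements of a finite open cover. Fix a finite open cover $\mc{U}$ of $X$ and write $\mc{U}_0^{n-1}$ for $\bigvee_{i=0}^{n-1}\Theta^{-i}(\Omega\times\mc{U})_{\mathcal{E}}$, whose $\omega$-section is $\bigvee_{i=0}^{n-1}(T_\omega^i)^{-1}\mc{U}(\theta^i\omega)$. If ${\rm diam}(\mc{U})\le\epsilon$, then every non-empty atom of this $\omega$-section has $d_n^\omega$-diameter at most $\epsilon$, hence is contained in $B_{d_n^\omega}(x,2\epsilon)$ for any of its points $x$; if instead ${\rm Leb}(\mc{U})\ge\epsilon/4$, then applying the Lebesgue property at each point $T_\omega^i x$, $0\le i<n$, shows every Bowen ball $B_{d_n^\omega}(x,\epsilon/4)$ lies inside a single atom of that $\omega$-section.

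Feeding these inclusions into the minimal-covering-number functionals gives, for every $\omega$, every $n\ge 1$, and every $0<\delta<1$,
\[
N_{\mu_\omega}^\delta(n,2\epsilon)\le N_{\mu_\omega}(\mc{U}_0^{n-1},\delta)\quad\text{whenever }{\rm diam}(\mc{U})\le\epsilon,
\]
and, choosing via Lemma \ref{key} a cover $\mc{U}_\epsilon$ with ${\rm diam}(\mc{U}_\epsilon)\le\epsilon$ and ${\rm Leb}(\mc{U}_\epsilon)\ge\epsilon/4$,
\[
N_{\mu_\omega}((\mc{U}_\epsilon)_0^{n-1},\delta)\le N_{\mu_\omega}^\delta(n,\epsilon/4).
\]
All integrands are measurable by Propositions \ref{11} and \ref{prop 3.2}, so I may take $\log$, then $\frac1n\int\cdot\,d\mb{P}(\omega)$, then $\limsup_{n\to\infty}$, then $\delta\to 0$ in each inequality (each holds for fixed $n,\delta$), obtaining for every $\mu\in E_{\mathbb{P}}(T)$
\[
\overline{h}_\mu^K(T,2\epsilon)\ \le\ \inf_{\substack{{\rm diam}(\mc{U})\le\epsilon\\ \mc{U}\in\mc{C}_X^o}} h_\mu^S\big(T,(\Omega\times\mc{U})_{\mathcal{E}}\big)\ \le\ h_\mu^S\big(T,(\Omega\times\mc{U}_\epsilon)_{\mathcal{E}}\big)\ \le\ \overline{h}_\mu^K(T,\epsilon/4),
\]
where the first inequality records that the bound holds for every admissible $\mc{U}$, and the middle identifications $\overline{h}_\mu^S=h_\mu^S$ on covers use Theorem \ref{local variational principle}.

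To finish, take $\sup_{\mu\in E_{\mathbb{P}}(T)}$ across the chain, divide by $|\log\epsilon|$, and let $\epsilon\to 0$. Since $|\log 2\epsilon|/|\log\epsilon|\to 1$ and $|\log(\epsilon/4)|/|\log\epsilon|\to 1$, the rescaled outer scales $2\epsilon$ and $\epsilon/4$ do not affect the $\limsup$, so
\[
\limsup_{\epsilon\to 0}\frac{1}{|\log\epsilon|}\sup_{\mu\in E_{\mathbb{P}}(T)}\overline{h}_\mu^K(T,\epsilon)=\limsup_{\epsilon\to 0}\frac{1}{|\log\epsilon|}\sup_{\mu\in E_{\mathbb{P}}(T)}\inf_{\substack{{\rm diam}(\mc{U})\le\epsilon\\ \mc{U}\in\mc{C}_X^o}} h_\mu^S\big(T,(\Omega\times\mc{U})_{\mathcal{E}}\big),
\]
and the right side equals $\mb{E}\umdim(T,X,d)$ by Theorem \ref{main1}. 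Running the same argument with $\liminf$ in place of the outer $\limsup$ gives the formula for $\mb{E}\lmdim(T,X,d)$, and using $\liminf_{n\to\infty}$ throughout the middle step turns $\overline{h}_\mu^K$ into $\underline{h}_\mu^K$ (with $\underline{h}_\mu^S=h_\mu^S$ from Theorem \ref{local variational principle}), so the $\underline{h}_\mu^K$ versions follow identically.

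I expect the main points to handle with care — rather than a genuine obstacle — to be the open-versus-closed-ball bookkeeping in the two inclusions, which I resolve simply by inflating $\epsilon$ to $2\epsilon$, and the verification that the two-sided squeeze is preserved under the nested limits $n\to\infty$, $\delta\to 0$, $\epsilon\to 0$ together with the normalisation by $|\log\epsilon|$.
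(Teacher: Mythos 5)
Your proposal is correct and follows essentially the same route as the paper: both arguments squeeze the Katok $\epsilon$-entropy between Shapira entropies of finite open covers by comparing Bowen balls with atoms of $\bigvee_{i=0}^{n-1}(T_\omega^i)^{-1}\mathcal{U}(\theta^i\omega)$ via the diameter and Lebesgue-number bounds from Lemma \ref{key}, identify $h_\mu^S$ with $h_\mu^{\bf r}$ through Theorem \ref{local variational principle}, and conclude from Theorem \ref{main1}. The only difference is your inflation of $\epsilon$ to $2\epsilon$ in the first inclusion (the paper works with ${\rm diam}(\mathcal{U})<\epsilon$ strictly and keeps the radius $\epsilon$), which is immaterial after normalising by $|\log\epsilon|$.
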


\begin{proof}
It suffices to show the results hold for  $\mb{E}\umdim(T,X,d)$ since the second  one follows similarly. 
Fix $\epsilon >0$. Let $0<\delta<1$ and $\mu\in {E}_{\mathbb{P}}(T)$. Let $\mathcal{U}=\left\lbrace U_{1},\cdots, U_{l} \right\rbrace $ be a finite open cover of ${X}$ with ${\rm diam}(\mathcal{U})<\epsilon$. Then the family $\mathcal{U}(\omega)$  formed by the sets $U\cap \mathcal{E}_{\omega}$ with $U\in \mathcal{U}$ is an   open cover of $\mc{E}_{\omega}$.  This implies that each  element of $\bigvee_{i=0}^{n-1}(T_{\omega}^{i})^{-1}\mathcal{U}(\theta^{i}\omega)$  can be contained in an $(n, \epsilon, \omega)$-Bowen ball. So 
\begin{align*}
N_{\mu_{\omega}}^{\delta}(n, \epsilon) \leq N_{\mu_{\omega}}\left( \bigvee_{i=0}^{n-1}(T_{\omega}^{i})^{-1}\mathcal{U}(\theta^{i}\omega), \delta\right). 
\end{align*}
This shows
\begin{align}\label{inequ 3.10}
\overline{h}_{\mu}^{K}(T, \epsilon)\leq \inf_{\substack{{\rm diam}(\mc{U})\leq \epsilon,\\ \mc{U}\in \mc{C}_{X}^o }}h_{\mu}^{S}(T, (\Omega\times\mathcal{U})_{\mc{E}}).
\end{align}

By Lemma \ref{key} again, we can choose  a finite cover $\mathcal{U}$ of $X$ such that ${\rm diam}(\mc{U})\leq\epsilon$ and ${\rm Leb}(\mc{U})\geq \frac{\epsilon}{4}$. Since each $(n, \frac{\epsilon}{4}, \omega)$-Bowen ball is contained in some element of $\bigvee_{i=0}^{n-1}(T_{\omega}^{i})^{-1}\mathcal{U}(\theta^{i}\omega)$, then $N_{\mu_{\omega}}(\bigvee_{i=0}^{n-1}(T_{\omega}^{i})^{-1}\mathcal{U}(\theta^{i}\omega), \delta)\leq N_{\mu_{\omega}}^{\delta}(n, \frac{\epsilon}{4})$. 
This shows
\begin{align}\label{inequ 3.11}
 \inf_{\substack{{\rm diam}(\mc{U})\leq \epsilon,\\ \mc{U}\in \mc{C}_{X}^o }}h_{\mu}^{S}(T, (\Omega\times\mathcal{U})_{\mc{E}}) \leq 	\overline{h}_{\mu}^{K}(T, \frac{\epsilon}{4}).
\end{align}
Therefore, by inequalities (\ref{inequ 3.10}), (\ref{inequ 3.11}) and  Theorem \ref{main1},  we get the desired results. 
\end{proof}

\subsection{Variational principle  IV: Brin-Katok local $\epsilon$-entropy}\label{sec5}
In this subsection, we  borrow Shannon-McMillan-Breiman theorem of random dynamical systems and  Theorem \ref{main2} to establish the fourth variational principle  for metric mean dimensions in terms of Brin-Katok local  $\epsilon$-entropy.   

Let $\mu \in M_{\mathbb{P}}(T)$, $x\in \mathcal{X}$ and $\omega\in \Omega$.  Put
\begin{align*}
&\overline{h}_{\mu_{\omega}}^{BK}(T,x, \epsilon)=\limsup\limits_{n\rightarrow \infty}-\frac{1}{n}\log \mu_{\omega}(B_{d_{n}^{\omega}}(x, \epsilon)),\\&
\underline{h}_{\mu_{\omega}}^{BK}(T,x, \epsilon)=\liminf\limits_{n\rightarrow \infty}-\frac{1}{n}\log \mu_{\omega}(B_{d_{n}^{\omega}}(x, \epsilon)).
\end{align*}



We define  the  \emph{upper and lower Brin-Katok local  $\epsilon$-entropies of $\mu$ at $x$} as
\begin{align*}
\overline{h}_{\mu}^{BK}(T, \epsilon)&=\int \overline{h}_{\mu_{\omega}}^{BK}(T, x , \epsilon) d \mu,\\
\underline{h}_{\mu}^{BK}(T, \epsilon)&=\int \underline{h}_{\mu_{\omega}}^{BK}(T, x , \epsilon) d \mu.
\end{align*}
The Brin-Katok's entropy formula for RDS is given by Zhu  in \cite[Theorem 2.1]{Zhu}.
\begin{prop}\label{prop 3.2} Let $T$ be a continuous bundle RDS over  a measure-preserving  system $(\Omega, \mc{F},\mb{P}, \theta)$. If $\mu\in {E}_{\mathbb{P}}(T)$, then for every $\epsilon>0$,
\begin{align}\label{eq5}
	\overline{h}_{\mu_{\omega}}^{BK}(T, x, \epsilon)=\overline{h}_{\mu}^{BK}(T, \epsilon) ~\text{and}~ \underline{h}_{\mu_{\omega}}^{BK}(T, x, \epsilon)=\underline{h}_{\mu}^{BK}(T,  \epsilon)
\end{align}
for $\mu$-a.e  $(\omega, x)$.
\end{prop}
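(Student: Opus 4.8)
The statement is precisely \cite[Theorem 2.1]{Zhu}, so one option is simply to cite it; here I describe the underlying argument. Write $\phi(\omega,x)=\overline{h}_{\mu_\omega}^{BK}(T,x,\epsilon)$ and $\psi(\omega,x)=\underline{h}_{\mu_\omega}^{BK}(T,x,\epsilon)$. Since by definition $\int\phi\,d\mu=\overline{h}_\mu^{BK}(T,\epsilon)$ and $\int\psi\,d\mu=\underline{h}_\mu^{BK}(T,\epsilon)$, the claim amounts to saying that $\phi$ and $\psi$ are $\mu$-a.e.\ constant. The plan is to prove that both are $\Theta$-invariant $\mu$-a.e.\ and then invoke the $\Theta$-ergodicity of $\mu\in E_{\mathbb P}(T)$.

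First I would settle measurability and integrability. Measurability of $\phi$ and $\psi$ follows, as in Proposition \ref{11}, from measurability of $\omega\mapsto\mu_\omega$ and of the sets $\{(\omega,x,y):d_n^\omega(x,y)<\epsilon\}$, since $\limsup_n$ and $\liminf_n$ of measurable functions remain measurable. For integrability, fix a finite Borel partition $\alpha$ of $X$ with ${\rm diam}(\alpha)<\epsilon$; then each $y$ in the atom $A_{\alpha,\omega}^n(x)$ satisfies $d(T_\omega^i x,T_\omega^i y)\le{\rm diam}(\alpha)<\epsilon$ for $0\le i<n$, so $A_{\alpha,\omega}^n(x)\subset B_{d_n^\omega}(x,\epsilon)$, hence $-\tfrac1n\log\mu_\omega(B_{d_n^\omega}(x,\epsilon))\le-\tfrac1n\log\mu_\omega(A_{\alpha,\omega}^n(x))$. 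Letting $n\to\infty$ and applying the Shannon--McMillan--Breiman theorem for RDS (Lemma \ref{SMB}) gives $\phi(\omega,x),\psi(\omega,x)\le h_\mu^{\bf r}(T,\alpha)<\infty$ for $\mu$-a.e.\ $(\omega,x)$, so $\phi,\psi\in L^1(\mu)$.

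Next I would establish a one-sided invariance. Using $T_\omega^i=T_{\theta\omega}^{i-1}\circ T_\omega$, one checks that $y\in B_{d_n^\omega}(x,\epsilon)$ iff $d(x,y)<\epsilon$ and $T_\omega y\in B_{d_{n-1}^{\theta\omega}}(T_\omega x,\epsilon)$; in particular $B_{d_n^\omega}(x,\epsilon)\subset T_\omega^{-1}B_{d_{n-1}^{\theta\omega}}(T_\omega x,\epsilon)$ (only an inclusion, since $T_\omega$ need not be fiberwise invertible). Because $\mu\in M_{\mathbb P}(\mathcal E)$, we have $T_\omega\mu_\omega=\mu_{\theta\omega}$ for $\mathbb P$-a.e.\ $\omega$, so $\mu_\omega(B_{d_n^\omega}(x,\epsilon))\le\mu_{\theta\omega}(B_{d_{n-1}^{\theta\omega}}(T_\omega x,\epsilon))$. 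Taking $-\tfrac1n\log$ of both sides, using $-\tfrac1n\log\mu_{\theta\omega}(B_{d_{n-1}^{\theta\omega}}(T_\omega x,\epsilon))=\tfrac{n-1}{n}\bigl(-\tfrac1{n-1}\log\mu_{\theta\omega}(B_{d_{n-1}^{\theta\omega}}(T_\omega x,\epsilon))\bigr)$ together with $\tfrac{n-1}{n}\to1$, and passing to $\limsup_{n\to\infty}$ (resp.\ $\liminf_{n\to\infty}$), we obtain $\phi\ge\phi\circ\Theta$ and $\psi\ge\psi\circ\Theta$ $\mu$-a.e.

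Finally, since $\mu$ is $\Theta$-invariant and $\phi\in L^1(\mu)$, one has $\int(\phi-\phi\circ\Theta)\,d\mu=0$ with non-negative integrand, forcing $\phi=\phi\circ\Theta$ $\mu$-a.e., and similarly $\psi=\psi\circ\Theta$ $\mu$-a.e.; $\Theta$-ergodicity of $\mu$ then makes $\phi$ and $\psi$ $\mu$-a.e.\ constant, equal to $\int\phi\,d\mu=\overline{h}_\mu^{BK}(T,\epsilon)$ and $\int\psi\,d\mu=\underline{h}_\mu^{BK}(T,\epsilon)$ respectively, which is the assertion. I expect the one-sided invariance to be the delicate step: one must work with the mere inclusion $B_{d_n^\omega}(x,\epsilon)\subset T_\omega^{-1}B_{d_{n-1}^{\theta\omega}}(T_\omega x,\epsilon)$ (as $T_\omega$ is only a fiberwise endomorphism) and with $T_\omega\mu_\omega=\mu_{\theta\omega}$ holding only for $\mathbb P$-a.e.\ $\omega$, so the exact identity $\phi=\phi\circ\Theta$ emerges only after combining this with $\Theta$-invariance of $\mu$ and the $L^1$ bound; the remaining steps are routine.
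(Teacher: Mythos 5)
Your proof follows essentially the same route as the paper's: derive a one-sided comparison between $\mu_\omega(B_{d_n^\omega}(x,\epsilon))$ and $\mu_{\theta\omega}(B_{d_{n-1}^{\theta\omega}}(T_\omega x,\epsilon))$ from $T_\omega\mu_\omega=\mu_{\theta\omega}$, deduce that the local quantity is one-sidedly $\Theta$-invariant, and conclude by ergodicity. In fact your inclusion $B_{d_n^\omega}(x,\epsilon)\subset T_\omega^{-1}B_{d_{n-1}^{\theta\omega}}(T_\omega x,\epsilon)$ is the correct direction (the paper's displayed ``$\supset$'' and the resulting $F\le F\circ\Theta$ are reversed, though either one-sided inequality suffices for the ergodicity argument), and your explicit measurability and $L^1$ checks supply details the paper leaves implicit.
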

\begin{proof}
Let $d \mu(\omega, x)=d \mu_{\omega}(x) d \mb{P}(\omega)$ be disintegration    of $\mu\in E_{\mathbb{P}}(\mathcal{E})$ on $\mathcal{E}$. Let $F(\omega,x):=	\overline{h}_{\mu_{\omega}}^{BK}(T, x, \epsilon)$. Fix $n$. Then 
\begin{align*}
B_{d_{n}^{\omega}}(x, \epsilon)&=\cap_{j=0}^{n-1}(T_\omega^j)^{-1}(B(T_\omega^jx,\epsilon)\cap \mathcal{E}_{\theta^j\omega})\\
&=T_\omega^{-1}\cap_{j=0}^{n-1}(T_{\theta\omega}^{j-1})^{-1}(B(T_{\theta \omega}^{j-1}(T_\omega x),\epsilon)\cap \mathcal{E}_{\theta^{j-1}{\theta \omega}}), \text{by}~T_\omega^jx=T_{\theta \omega}^{j-1}\circ T_\omega x\\
&\supset  T_\omega^{-1}B_{d_{n-1}^{\theta \omega}}(T_\omega x, \epsilon)
\end{align*}
and hence $\mu_{\omega}(B_{d_{n}^{\omega}}(x, \epsilon))\geq\mu_{\omega}(T_\omega^{-1}B_{d_{n-1}^{\theta \omega}}(T_\omega x, \epsilon))=\mu_{\theta \omega}(B_{d_{n-1}^{\theta \omega}}(T_\omega x, \epsilon))$ for $\mathbb{P}$-a.e $\omega$ by using the fact $T_\omega\mu_\omega=\mu_{\theta \omega}.$ This shows   for $\mu$-a.e  $(\omega, x)$
$$F(\omega,x)=	\overline{h}_{\mu_{\omega}}^{BK}(T, x, \epsilon)\leq\overline{h}_{\mu_{\theta \omega}}^{BK}(T, T_\omega x, \epsilon)=F\circ\Theta (\omega,x).$$
Since $\mu$ is ergodic, this shows  for $\mu$-a.e  $(\omega, x)$ $\overline{h}_{\mu_{\omega}}^{BK}(T, x, \epsilon)=\overline{h}_{\mu}^{BK}(T, \epsilon)$.
\end{proof}

The  Lemma \ref{SMB} states the well-known Shannon-McMillan-Breiman Theorem for RDS \cite{Bog}. 

\begin{lem}[Shannon-McMillan-Breiman Theorem]\label{SMB} 
Let $T$ be a continuous bundle RDS over a measure-preserving  system  $(\Omega, \mc{F},\mb{P}, \theta)$. Let $\mu\in E_{\mathbb{P}}(T)$ and $\xi$  be  a finite partition  of $\mathcal{E}$. Then for $\mu$-a.e $(\omega,x)$, 
\begin{align*}
\lim\limits_{n\rightarrow \infty}-\dfrac{1}{n} \log \mu_{\omega}(A_{\xi, \omega}^{n}(x))=h_{\mu}^{\bf r}(T, \xi).
\end{align*}
\end{lem}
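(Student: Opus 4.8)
The plan is to run Breiman's classical argument for the Shannon--McMillan--Breiman theorem relative to the $\sigma$-algebra $\mathcal{F}_{\mathcal{E}}$ and the skew product $\Theta$, and then to identify the resulting limit with $h_\mu^{\bf r}(T,\xi)$ through the relative Kolmogorov--Sinai identity. For a finite partition $\eta$ of $\mathcal{E}$ and a sub-$\sigma$-algebra $\mathcal{G}$, write $I_\mu(\eta\mid\mathcal{G})(\omega,x)=-\log\mathbb{E}_\mu[\mathbf{1}_{\eta(\omega,x)}\mid\mathcal{G}](\omega,x)$, where $\eta(\omega,x)$ denotes the atom of $\eta$ through $(\omega,x)$. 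The first point I would record is the identification $-\log\mu_\omega(A_{\xi,\omega}^{n}(x))=I_n(\omega,x):=I_\mu\big(\bigvee_{i=0}^{n-1}\Theta^{-i}\xi\mid\mathcal{F}_{\mathcal{E}}\big)(\omega,x)$ for $\mu$-a.e.\ $(\omega,x)$; this holds because the disintegration $d\mu=d\mu_\omega\,d\mathbb{P}$ realizes $\mathbb{E}_\mu[\,\cdot\mid\mathcal{F}_{\mathcal{E}}]$ as fibrewise integration against $\mu_\omega$, while the $\omega$-section of the atom of $\bigvee_{i=0}^{n-1}\Theta^{-i}\xi$ through $(\omega,x)$ is exactly $A_{\xi,\omega}^{n}(x)$.

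Next I would telescope $I_n$. The chain rule $I_\mu(\alpha\vee\beta\mid\mathcal{G})=I_\mu(\beta\mid\mathcal{G})+I_\mu(\alpha\mid\beta\vee\mathcal{G})$ applied with $\alpha=\xi$ and $\beta=\Theta^{-1}\bigvee_{i=0}^{n-2}\Theta^{-i}\xi$, together with the equivariance $I_\mu(\Theta^{-1}\eta\mid\mathcal{F}_{\mathcal{E}})=I_\mu(\eta\mid\mathcal{F}_{\mathcal{E}})\circ\Theta$ (valid since $\theta$ is invertible and measure-preserving, so $\Theta^{-1}\mathcal{F}_{\mathcal{E}}=\mathcal{F}_{\mathcal{E}}$), gives $I_n=I_{n-1}\circ\Theta+g_{n-1}$ with $g_m:=I_\mu\big(\xi\mid\mathcal{F}_{\mathcal{E}}\vee\bigvee_{i=1}^{m}\Theta^{-i}\xi\big)$, hence $I_n=\sum_{k=0}^{n-1}g_{n-1-k}\circ\Theta^{k}$ since $I_0\equiv0$. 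Two analytic facts are then needed: by Doob's increasing-martingale theorem $g_m\to g_\infty:=I_\mu\big(\xi\mid\mathcal{F}_{\mathcal{E}}\vee\bigvee_{i\ge1}\Theta^{-i}\xi\big)$ both $\mu$-a.e.\ and in $L^1(\mu)$, and by the Chung--Neveu maximal lemma $g^\ast:=\sup_m g_m\in L^1(\mu)$ because $\xi$ is finite. Then Breiman's trick: write $\tfrac1n I_n=\tfrac1n\sum_{k=0}^{n-1}g_\infty\circ\Theta^{k}+\tfrac1n\sum_{k=0}^{n-1}(g_{n-1-k}-g_\infty)\circ\Theta^{k}$; since $\mu\in E_{\mathbb{P}}(T)$ is $\Theta$-ergodic, Birkhoff's ergodic theorem sends the first sum to $\int g_\infty\,d\mu$ $\mu$-a.e., while the second is handled by bounding the terms with $n-1-k\ge N$ by $G_N:=\sup_{m\ge N}|g_m-g_\infty|$ and the remaining $N$ terms by $\tfrac1n\sum_{k=n-N}^{n-1}(g^\ast+|g_\infty|)\circ\Theta^k\to0$, which yields $\limsup_n\tfrac1n\sum_{k=0}^{n-1}|g_{n-1-k}-g_\infty|\circ\Theta^k\le\int G_N\,d\mu\to0$ as $N\to\infty$ by dominated convergence ($G_N\downarrow0$ $\mu$-a.e.\ and $G_N\le g^\ast+|g_\infty|\in L^1$). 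Therefore $\tfrac1n I_n\to\int g_\infty\,d\mu$ $\mu$-a.e.

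It then remains to identify $\int g_\infty\,d\mu$ with $h_\mu^{\bf r}(T,\xi)$. Integrating the telescoping identity against the $\Theta$-invariant measure $\mu$ gives $H_\mu\big(\bigvee_{i=0}^{n-1}\Theta^{-i}\xi\mid\mathcal{F}_{\mathcal{E}}\big)=\sum_{j=0}^{n-1}a_j$, where $a_j:=H_\mu\big(\xi\mid\mathcal{F}_{\mathcal{E}}\vee\bigvee_{i=1}^{j}\Theta^{-i}\xi\big)$ is non-increasing in $j$ with $\lim_j a_j=\int g_\infty\,d\mu$; passing to Ces\`{a}ro limits then gives $h_\mu^{\bf r}(T,\xi)=\lim_n\tfrac1n H_\mu\big(\bigvee_{i=0}^{n-1}\Theta^{-i}\xi\mid\mathcal{F}_{\mathcal{E}}\big)=\int g_\infty\,d\mu$. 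Combined with the previous paragraph, this yields $\lim_n-\tfrac1n\log\mu_\omega(A_{\xi,\omega}^{n}(x))=h_\mu^{\bf r}(T,\xi)$ for $\mu$-a.e.\ $(\omega,x)$, which is the assertion.

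The step I expect to be the main obstacle is the bookkeeping that makes the classical proof go through verbatim in the bundle setting. Precisely, both the identification $-\log\mu_\omega(A_{\xi,\omega}^{n}(x))=I_n(\omega,x)$ and the equivariance $I_\mu(\Theta^{-1}\eta\mid\mathcal{F}_{\mathcal{E}})=I_\mu(\eta\mid\mathcal{F}_{\mathcal{E}})\circ\Theta$ rely on $\mathcal{F}_{\mathcal{E}}$ being exactly $\Theta$-invariant, which uses the invertibility of $\theta$ and the Lebesgue-space hypothesis on $(\Omega,\mathcal{F},\mathbb{P})$ that guarantees well-behaved regular conditional probabilities $\mu_\omega$; and the $L^1$-domination of the maximal function $g^\ast$ must be obtained from the Chung--Neveu lemma applied to the increasing family $\mathcal{F}_{\mathcal{E}}\vee\bigvee_{i=1}^{m}\Theta^{-i}\xi$. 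Everything else is the standard maximal-ergodic argument of Breiman.
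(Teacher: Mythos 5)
Your argument is correct and complete: the identification of $-\tfrac1n\log\mu_\omega(A_{\xi,\omega}^{n}(x))$ with the conditional information function $\tfrac1n I_\mu\bigl(\bigvee_{i=0}^{n-1}\Theta^{-i}\xi\mid\mathcal{F}_{\mathcal{E}}\bigr)$ via the disintegration, the telescoping through the chain rule and the $\Theta$-invariance (mod null sets) of $\mathcal{F}_{\mathcal{E}}$, and the Breiman/Chung--Neveu/Birkhoff machinery all go through exactly as you describe. The paper itself offers no proof of this lemma --- it simply cites Bogensch\"utz --- and your relativized Breiman argument is precisely the standard proof given in that reference, so there is nothing to compare beyond noting that you have supplied the details the paper omits.
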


\begin{thm}\label{main3}
Let $T$ be a continuous bundle RDS over an ergodic measure-preserving  $(\Omega, \mc{F},\mb{P}, \theta)$. Then
\begin{align*}
&\mb{E}\umdim(T,X,d)=\limsup\limits_{\epsilon \rightarrow 0}\frac{1}{|\log \epsilon|}\sup_{\mu\in {M}_{\mathbb{P}}(T)}\overline{h}_{\mu}^{BK}(T, \epsilon),\\
&
\mb{E}\lmdim(T, X, d)=\liminf\limits_{\epsilon\rightarrow 0} \frac{1}{|\log \epsilon|}\sup_{\mu\in {M}_{\mathbb{P}}(T)}\overline{h}_{\mu}^{BK}(T, \epsilon).
\end{align*}
\end{thm}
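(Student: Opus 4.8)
The plan is to trap the Brin--Katok local $\epsilon$-entropy of an invariant measure between the $\epsilon$-topological entropy $h_{top}^{\bf r}(T,X,d,\cdot)$ from above and Katok's $\epsilon$-entropy from below, and then invoke Theorem~\ref{main2}. As usual it is enough to prove the formula for $\mb{E}\umdim(T,X,d)$; the statement for $\mb{E}\lmdim$ is obtained by replacing every $\limsup$ with a $\liminf$ throughout.

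For the upper bound I would first show that $\overline{h}_{\mu}^{BK}(T,\epsilon)\le\int S(\omega,\epsilon/2)\,d\mathbb{P}(\omega)=h_{top}^{\bf r}(T,X,d,\epsilon/2)$ for \emph{every} $\mu\in M_{\mathbb{P}}(T)$, ergodic or not. Fix $\omega$ and $n$, pick an $(\omega,\epsilon/2,n)$-spanning set $\{y_i\}$, and note that $x\in B_{d_n^\omega}(y_i,\epsilon/2)$ forces $B_{d_n^\omega}(y_i,\epsilon/2)\subset B_{d_n^\omega}(x,\epsilon)$; hence the level set $\{x\in\mc{E}_\omega:\mu_\omega(B_{d_n^\omega}(x,\epsilon))<e^{-na}\}$ is covered by those $B_{d_n^\omega}(y_i,\epsilon/2)$ whose $\mu_\omega$-mass is $<e^{-na}$, so its $\mu_\omega$-measure is at most ${\rm span}(\omega,\epsilon/2,n)e^{-na}\le{\rm sep}(\omega,\epsilon/2,n)e^{-na}$. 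For $\mathbb{P}$-a.e.\ $\omega$ and any $a>S(\omega,\epsilon/2)$ these numbers are summable in $n$, so the fibrewise Borel--Cantelli lemma in $(\mc{E}_\omega,\mu_\omega)$ gives $\overline{h}_{\mu_\omega}^{BK}(T,x,\epsilon)\le S(\omega,\epsilon/2)$ for $\mu_\omega$-a.e.\ $x$; integrating against $d\mu_\omega(x)\,d\mathbb{P}(\omega)$ and using that $\mu$ has marginal $\mathbb{P}$ yields the claim. Dividing by $|\log\epsilon|$, using $|\log(\epsilon/2)|/|\log\epsilon|\to1$, and passing to $\limsup_{\epsilon\to0}$ gives $\limsup_{\epsilon\to0}\frac{1}{|\log\epsilon|}\sup_{\mu\in M_{\mathbb{P}}(T)}\overline{h}_{\mu}^{BK}(T,\epsilon)\le\mb{E}\umdim(T,X,d)$.

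For the lower bound, since $E_{\mathbb{P}}(T)\subset M_{\mathbb{P}}(T)$ it suffices to prove $\overline{h}_{\mu}^{BK}(T,\epsilon)\ge\overline{h}_{\mu}^{K}(T,3\epsilon)$ for every ergodic $\mu$; then $\sup_{\mu\in M_{\mathbb{P}}(T)}\overline{h}_{\mu}^{BK}(T,\epsilon)\ge\sup_{\mu\in E_{\mathbb{P}}(T)}\overline{h}_{\mu}^{K}(T,3\epsilon)$, and dividing by $|\log\epsilon|$, using $|\log 3\epsilon|/|\log\epsilon|\to1$, and applying Theorem~\ref{main2} closes the argument. Here I would use the Brin--Katok entropy formula for RDS (Proposition~\ref{prop 3.2}): for ergodic $\mu$ the function $\overline{h}_{\mu_\omega}^{BK}(T,x,\epsilon)$ is $\mu$-a.e.\ equal to the constant $c:=\overline{h}_{\mu}^{BK}(T,\epsilon)$. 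Given $\eta,\delta>0$, Egorov's theorem applied to $\sup_{m\ge n}\big(-\frac1m\log\mu_\omega(B_{d_m^\omega}(x,\epsilon))\big)\downarrow c$ produces $n_0$ and $G$ with $\mu(G)>1-\delta$ so that $\mu_\omega(B_{d_m^\omega}(x,\epsilon))\ge e^{-m(c+\eta)}$ whenever $(\omega,x)\in G$ and $m\ge n_0$; by Chebyshev the set $\Omega'=\{\omega:\mu_\omega(G_\omega)>1-\sqrt\delta\}$ has $\mathbb{P}(\Omega')>1-\sqrt\delta$. For $\omega\in\Omega'$ and $m\ge n_0$, a greedy maximal $d_m^\omega$-disjoint family of balls $B_{d_m^\omega}(x_j,\epsilon)$ with $x_j\in G_\omega$ has its $3\epsilon$-enlargements covering $G_\omega$, so $N_{\mu_\omega}^{\sqrt\delta}(m,3\epsilon)$ is at most the cardinality of this family, which by disjointness and the mass lower bound is $\le e^{m(c+\eta)}$. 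Integrating over $\omega$, and bounding the contribution of the small set $\Omega\setminus\Omega'$ by ${\rm sep}(\omega,3\epsilon,m)\le\prod_{i=0}^{m-1}{\rm sep}(\theta^i\omega,3\epsilon,1)$ together with Birkhoff's theorem (which provides a uniformly integrable majorant), one gets $\limsup_{m\to\infty}\frac1m\int\log N_{\mu_\omega}^{\sqrt\delta}(m,3\epsilon)\,d\mathbb{P}(\omega)\le c+\eta+o_\delta(1)$; letting $\delta\to0$ and then $\eta\to0$ gives $\overline{h}_{\mu}^{K}(T,3\epsilon)\le c=\overline{h}_{\mu}^{BK}(T,\epsilon)$.

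The main obstacle is this last integration step: the sets $G$, $\Omega'$ and the selected ball families all depend on the window length $m$, so a per-step covering estimate does not automatically pass to a bound on a $\limsup$ of an integral; the $\mu$-a.e.\ constancy of the Brin--Katok local entropy (Proposition~\ref{prop 3.2}) is precisely what legitimizes the passage, and a uniform integrability argument---Birkhoff applied to the subadditive majorant of $\log{\rm sep}(\omega,\epsilon,\cdot)$---is what lets one discard the exceptional $\omega$'s. A recurring but routine secondary point is the joint measurability of $\omega\mapsto$ the various $\epsilon$-entropy integrands, which one treats as in Propositions~\ref{11} and~\ref{prop 3.2}. One may alternatively handle the ergodic part of the upper bound through Lemma~\ref{SMB}: if $\alpha\in\mc{P}_{X}$ with ${\rm diam}(\alpha)\le\epsilon$ and $\xi=(\Omega\times\alpha)_{\mc{E}}$, then $A_{\xi,\omega}^{n}(x)\subset B_{d_n^\omega}(x,\epsilon)$, whence $\overline{h}_{\mu}^{BK}(T,\epsilon)\le h_{\mu}^{\bf r}(T,(\Omega\times\alpha)_{\mc{E}})$ and then $\le\inf_{{\rm diam}(\alpha)\le\epsilon}h_{\mu}^{\bf r}(T,(\Omega\times\alpha)_{\mc{E}})$, which together with Theorem~\ref{main} reproves the upper bound; the Borel--Cantelli route is preferable, though, because it applies verbatim to all of $M_{\mathbb{P}}(T)$ and thus avoids a separate affineness/ergodic-decomposition discussion of the supremum.
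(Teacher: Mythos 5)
Your proposal is correct and shares the paper's overall architecture (sandwich the Brin--Katok $\epsilon$-entropy between the $\epsilon$-topological entropy and Katok's $\epsilon$-entropy at a rescaled radius, then quote Theorem~\ref{main2}), but the two halves differ from the paper in instructive ways. For the lower bound your argument is essentially the paper's: the paper also uses the a.e.\ constancy from Proposition~\ref{prop 3.2}, forms the good fibre sets $G^{\omega}_{n,\rho}$, takes a maximal $(n,2\epsilon,\omega)$-separated subset whose $\epsilon$-balls are disjoint, and concludes $\overline{h}^{K}_{\mu}(T,2\epsilon)\le\overline{h}^{BK}_{\mu}(T,\epsilon)$; the only difference is bookkeeping --- you fix the good sets once via Egorov/Chebyshev and control the exceptional $\omega$'s by the uniform bound $\frac1m\log N^{\delta}_{\mu_\omega}(m,\cdot)\le\log K_\epsilon$ (with $K_\epsilon$ the maximal cardinality of an $\epsilon$-separated subset of the compact space $X$), whereas the paper lets the good sets depend on $(\omega,n)$ and swaps $\limsup$ with $\int$ by (reverse) Fatou, which tacitly needs the same uniform majorant. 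For the upper bound your route is genuinely different: the paper uses the inclusion $A^{n}_{(\Omega\times\xi)_{\mc E},\omega}(x)\subset B_{d_n^\omega}(x,\epsilon)$ together with Lemma~\ref{SMB} and Theorem~\ref{main} (your ``alternative'' remark), while you run a fibrewise Borel--Cantelli argument against $(\omega,\epsilon/2,n)$-spanning sets to get $\overline{h}^{BK}_{\mu}(T,\epsilon)\le h^{\bf r}_{top}(T,X,d,\epsilon/2)$ directly. This buys something real: the theorem is stated with $\sup_{\mu\in M_{\mathbb P}(T)}$, but the paper's SMB-based argument only bounds $\overline{h}^{BK}_{\mu}$ for \emph{ergodic} $\mu$ and never returns to the non-ergodic case, whereas your Borel--Cantelli estimate holds for every $\mu\in M_{\mathbb P}(T)$ (indeed it uses only the disintegration, not invariance), so it closes that gap cleanly. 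The remaining issues in your write-up are minor constant/radius bookkeeping ($2\epsilon$ versus $3\epsilon$, open versus closed balls) and do not affect the limits after dividing by $|\log\epsilon|$.
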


\begin{proof}
It suffices to show the first equality.
Fix $\epsilon>0$ and $\mu\in E_{\mb{P}}(T)$. Let $\xi$ be a finite Borel partition of $X$ with ${\rm diam} \xi<\epsilon$.  Then for all $(\omega,x)$, $ A_{(\Omega\times\xi)_{\mc{E}},\omega}^{n}(x)\subset B_{d_{n}^{\omega}}(x, \epsilon)$ holds  for every $n\in \mb{N}$. This implies that
\begin{align*}
&\limsup\limits_{n \rightarrow \infty} -\frac{1}{n}\log \mu_{\omega}(B_{d_{n}^{\omega}}(x, \epsilon))\leq \lim\limits_{n\rightarrow \infty}-\frac{1}{n} \log \mu_{\omega}(A_{\Omega\times \xi,\omega}^{n}(x)).
\end{align*}
By Proposition \ref{prop 3.2} and Lemma \ref{SMB}, we have 
$$\overline{h}_{\mu}^{BK}(T,\epsilon)\leq h_{\mu}^{\bf r}(T, (\Omega\times\xi)_{\mc{E}}).$$
Therefore, 
\begin{align*}
\mb{E}\umdim(T,d)&=\limsup_{\epsilon \rightarrow 0}\frac{1}{|\log \epsilon|}\sup_{\mu \in {E}_{\mathbb{P}}(T)}\inf_{{\rm diam }\xi \leq \epsilon,\atop \xi \in \mathcal{P}_X} h_{\mu}^{\bf r}(T,(\Omega\times \xi)_{\mc{E}}),\text{by Theorem} \ref{main} \\
& \geq\limsup_{\epsilon \rightarrow 0}\frac{1}{|\log \epsilon|} \sup_{\mu \in {E}_{\mathbb{P}}(T)} \overline{h}_{\mu}^{BK}(T, \epsilon).
\end{align*} 

By (\ref{eq5}), there exists  a $\mu$-full measure  set $E\subset \Omega\times X$ so that for $(\omega,x)\in E$, 
 \begin{align*}
 	\limsup_{n \rightarrow \infty}-\frac{1}{n} \log \mu_{\omega}(B_{d_{n}^{\omega}}(x, \epsilon))=\overline{h}_{\mu}^{BK}(T, \epsilon).
 \end{align*} 
Then $\mathbb{P}(\pi_{\Omega} E)=1$ and  $\mu(E)=\int_{\pi_{\Omega} E}\mu_{\omega}(E_{\omega})d\mathbb{P}(\omega)=1,$ where $E_\omega=\{x\in \mathcal{E}_\omega:(\omega,x)\in E\}$.   So we can assume that   $\mu_{\omega}(E_{\omega})=1$ for all $\omega\in \pi_{\Omega} E$. Given $\omega\in \pi_{\Omega} E$,  $\rho>0$ and $n\in \mb{N}$, set 
 \begin{align*}
 G_{n, \rho}^{\omega}=\left\lbrace x\in \mathcal{E}_\omega: -\frac{1}{n}\log \mu_{\omega}(B_{d_{n}^{\omega}}(x, \epsilon))< \overline{h}_{\mu}^{BK}(T, \epsilon)+\rho\right\rbrace .
 \end{align*}
Let  $0<\delta<1$. Then for all sufficiently large $n\in \mb{N}$ (depending on $\delta,\omega,\rho$), one has $\mu_{\omega}(G_{n, \rho}^{\omega})>1-\delta$. Let  $H_{n}$ be a maximal $(n, 2\epsilon, \omega)$-separated subset of $G_{n, \rho}^{\omega}$.  Then  it is also an $(n, 2\epsilon, \omega)$-spanning subset of $G_{n,\rho}^{\omega}$ and  the family $\left\lbrace B_{d_{n}^{\omega}}(x, \epsilon): x\in H_{n} \right\rbrace $  is pairwise disjoint. It follows that $\mu_{\omega}(\bigcup_{x\in H_{n}} B_{d_{n}^{\omega}}(x, 2\epsilon))\geq \mu_{\omega}(G_{n, \rho}^{\omega})>1-\delta$
and 
 \begin{align*}
 \# H_{n}\cdot e^{-n(\overline{h}_{\mu}^{BK}(T, \epsilon)+\rho)}\leq \sum_{x\in H_{n}} \mu_{\omega}(B_{d_{n}^{\omega}}(x, \epsilon))=\mu_{\omega}(\bigcup_{x\in H_{n}} B_{d_{n}^{\omega}}(x, \epsilon))\leq 1.
 \end{align*} 
 Then 
 $N_{\mu_{\omega}}^{\delta}(n, 2\epsilon)\leq \#H_{n}\leq e^{n(\overline{h}_{\mu}^{BK}(T, \epsilon)+\rho)}$. This yields that
 \begin{align*}
 \overline{h}_{\mu}^{BK}(T,\epsilon)+\rho&\geq
 \int_{\pi_{\Omega} E} \limsup_{n \rightarrow \infty} \frac{1}{n} \log N_{\mu_{\omega}}^{\delta}(n, 2\epsilon) d \mb{P}(\omega)\\
 &=
 \int \limsup_{n \rightarrow \infty} \frac{1}{n} \log N_{\mu_{\omega}}^{\delta}(n, 2\epsilon) d \mb{P}(\omega)\\
 &\geq  \limsup_{n \rightarrow \infty} \frac{1}{n} \int \log N_{\mu_{\omega}}^{\delta}(n, 2\epsilon) d \mb{P}(\omega),\text{by Fatou's lemma}.
 \end{align*}
 Letting $\delta \rightarrow 0$ and then letting  $\rho \rightarrow 0$, we obtain $\overline{h}_{\mu}^{K}(T, 2\epsilon) \leq \overline{h}_{\mu}^{BK}(T, \epsilon)$  for every $\mu\in E_{\mb{P}}(T)$.  Then by Theorem \ref{main2}, we have 
 \begin{align*}
\mb{E}\umdim(T,d)=\limsup\limits_{\epsilon \rightarrow 0} \frac{1}{|\log \epsilon|}\sup_{\mu \in {E}_{\mathbb{P}}(T)} \overline{h}_{\mu}^{K}(T, \epsilon)\leq \limsup\limits_{\epsilon \rightarrow 0} \frac{1}{|\log \epsilon|}\sup_{\mu \in {E}_{\mathbb{P}}(T)}\overline{h}_{\mu}^{BK}(T, \epsilon).
 \end{align*}
\end{proof}

{\bf Problem}: Do we have 
variational principle  for metric mean dimension in terms of  $\underline{h}_{\mu}^{BK}.$

\section*{Acknowledgement} 
The first author was supported by NNSF of China (12201328),
Zhejiang Provincial Natural Science Foundation of China  (LQ22A010012) and Ningbo Natural Science Foundation (2022J145). The first and second authors were supported by NNSF of China (11671208 and 11971236). The third author is supported by Project funded by China Postdoctoral Science Foundation (2023TQ0066). We would like to express our gratitude to Tianyuan Mathematical Center in Southwest China, Sichuan University and Southwest Jiaotong
University for their support and hospitality.


\end{document}